\newcommand{\N}{\mathbb{N}}
\newcommand{\Z}{\mathbb{Z}}
\newcommand{\R}{\mathbb{R}}
\newcommand{\C}{\mathbb{C}}
\newcommand{\Hs}{\mathcal{H}}
\newcommand{\Sh}{\mathcal{S}}
\newcommand{\inner}[2]{\langle #1, #2 \rangle}
\newcommand{\plig}{\textbf{plig }}
\newcommand{\norm}[1] {\| #1 \|}
\newcommand{\supp}[1]{\mathrm{Supp}(#1)}
\newcommand{\set}[1]{\left\{ #1 \right\}}
 \newtheorem{theorem}{Theorem}[section]
 \newtheorem{defn}[theorem]{Definition}
 \newtheorem{prop}[theorem]{Proposition}
\newtheorem{lemma}[theorem]{Lemma}
\newtheorem{corollary}[theorem]{Corollary}
\newtheorem{remark}[theorem]{Remark}
 \newtheorem{convention}[theorem]{Convention}
\begin{document}
\title{Embeddings of locally compact hyperbolic groups into $L_p$-spaces}
\author{Chris Cave \thanks{The first author is sponsored by the EPSRC, grant number EP/I0
16945/1. \url{cc1g11@soton.ac.uk} } \ and Dennis Dreesen \thanks{The second author is supported by the Marie Curie 7th framework programme (FP7). The author is a Marie Curie IEF research fellow. \url{dennis.dreesen@soton.ac.uk}.}}
\affil{University of Southampton, Building 54: Mathematics, University Road, SO17 1BJ Southampton, United Kingdom}

\maketitle
\abstract{In the last years, there has been a large amount of research on embeddability properties of finitely generated hyperbolic groups. In this paper, we elaborate on the more general class of locally compact hyperbolic groups. We compute the equivariant $L_p$-compression in a number of locally compact examples, such as the groups $SO(n,1)$.
Next, we show that although there are locally compact, non-discrete hyperbolic groups $G$ with Kazhdan's property ($T$), it is true that any locally compact hyperbolic group admits a proper affine isometric action on an $L_p$-space for $p$ larger than the Ahlfors regular conformal dimension of $\partial G$. This answers a question asked by Yves de Cornulier. Finally, we elaborate on the locally compact version of property $(A)$ and show that, as in the discrete case, it is connected to the non-equivariant Hilbert space compression of a group.\\

\noindent
Keywords: Locally compact hyperbolic groups, $L_p$-compression.\\
MSC 2010 Classification:20F65, 20F67, 22D10}

\section{Introduction}
\subsection{Locally compact hyperbolic groups}
The common convention when dealing with hyperbolic groups is that such groups are finitely generated and equipped with the word length metric relative to a finite generating subset. On the one hand, this leads to a very interesting theory with very strong results, but on the other hand, this class misses interesting groups such as $SO(n,1), SU(n,1), Sp(n,1)$ that contain hyperbolic uniform lattices. Gromov's work \cite{Gromov:1} already contained ideas which encompass locally compact hyperbolic groups. Following \cite{CT} we define a locally compact group $G$ to be {\bf hyperbolic} if it is compactly generated and word-hyperbolic with respect to the word length metric relative to some compact generating subset. This is equivalent to the group acting continuously, properly, cocompactly and isometrically on a proper hyperbolic geodesic metric space $X$ (see Corollary 2.6 of \cite{Caprace}). The space $X$ is determined up to quasi-isometry and so one can unambiguously define the hyperbolic boundary $\partial G$ of $G$ as the hyperbolic boundary of $X$. 
Some examples of locally compact hyperbolic groups are $SO(n,1),SU(n,1),Sp(n,1),F_4^{-20}$ and the class of groups of the form $H\rtimes_\alpha \Z, \ H\rtimes_\alpha \R$ where $\alpha(1)$ satisfies a suitable contracting property \cite{Caprace}.

There are many, also non-trivial, results that generalize to the locally compact setting. For example, in \cite{Carette-Dreesen}, the authors generalize Bowditch's topological characterization of discrete hyperbolic groups to the locally compact setting. This for starters has a nice application in the study of sharply-$n$-transitive actions on compact sets. All groups acting sharply-$n$-transitively on compact spaces $M$ are completely classifed, except for $n=3$. The authors show that $\sigma$-compact groups acting sharply-3-transitively on a compact space $M$ are necessarily locally compact hyperbolic, that $M$ is homeomorphic to the boundary of the group and that the action on $M$ coincides, via this homeomorphism, with the natural action of a hyperbolic group on its boundary.

On the other hand, as the discrete hyperbolic groups merely constitute a special case of the more general locally compact setting, one can not simply hope to extend all results from the discrete to the locally compact context. 
For example, from the discrete case, the intuition has grown that hyperbolicity and amenability are somehow incompatible: it is known that non-elementary finitely generated hyperbolic groups contain a free non-abelian subgroup $F_2$ and are thus non-amenable. On the other hand, in \cite{CT}, the authors prove the counter-intuitive fact that there {\em do} exist {\em amenable} non-elementary  locally compact hyperbolic groups! There are plenty of other differences as well: for example, it is possible for locally compact hyperbolic groups to act transitively on their boundary, even if the boundary is infinite.  In fact, such groups are studied in \cite{Caprace}, \cite{Carette-Dreesen}. On the other hand, non-elementary discrete hyperbolic groups are countable, and so cannot act transitively on their uncountable boundary.

The behaviour of groups with respect to embeddings into $L_p$-spaces $(p\geq 1)$ is directly related to properties such as the Haagerup property, Yu's property $A$, Kazhdan's property $T$, coarse embeddability and hence it is related to important conjectures such as the Novikov and Baum-Connes conjecture. 
Recently, a lot of work was done investigating the equivariant and non-equivariant embeddability behaviour of discrete hyperbolic groups into $L_p$-spaces $(p\geq 1)$ (see e.g. \cite{Tesseramain}, \cite{Brodskiy}, \cite{Yu:1}, \cite{Bourdonmain}, \cite{Nica} and others). For example, it was shown in \cite{Yu:1}, that although there are finitely generated hyperbolic groups with Kazhdan's property $(T)$ (which thus do not admit a proper affine isometric action on an $L_2$-space), each discrete hyperbolic group admits a proper affine isometric action on an $L_p$-space for $p$ sufficiently large! One of the things that we will show, is that this result persists in the locally compact hyperbolic setting. Moreover, we will be more specific and quantify {\em how proper} the action is, as this is related to group theoretic properties such as amenability and exactness. We give the necessary definitions before formulating our results.

\subsection{Equivariant and non-equivariant $L_p$-compression}
Throughout the remainder of this paper, and unless specifically mentioned otherwise, we denote by $p$ some fixed real number greater or equal to $1$.
\begin{defn}[see \cite{Gromov}]
A compactly generated group $G$ is {\bf coarsely embeddable into an $L_p$-space}, if there exists a measure space $(\Omega,\mu)$, a non-decreasing function $\rho_-:\R^+ \rightarrow \R^+$ such that $\lim_{t\to \infty} \rho_-(t)=+\infty$, a constant $C>0$ and a map $f:G\rightarrow L_p(\Omega,\mu)$, such that
\[ \rho_-(d(g,h)) \leq \|f(g)-f(h)\|_p \leq C d(g,h) \ \forall g,h\in G, \]
where $d$ is the word length metric relative to a compact generating subset.
The map $f$ is called a {\bf coarse embedding} of $G$ into $L_p(\Omega,\mu)$ and the map $\rho_-$ is called a {\bf compression function} for $f$.
\end{defn}
\begin{defn}
Let $G$ be a group and $(\Omega,\mu)$ a measure space. Fix $p\geq 1$. A map $f:G\rightarrow L_p(\Omega,\mu)$ is called {\bf $G$-equivariant}, if there is an affine isometric action $\alpha$ of $G$ on $L_p(\Omega,\mu)$ such that $\forall g,h\in G: f(gh)=\alpha(g)(f(h))$.
A locally compact second countable group $G$ admitting an equivariant coarse embedding into a Hilbert space is said to satisfy the {\bf Haagerup property}. 
\end{defn}
The Haagerup property is a subject of intense study \cite{Haagerup} and is known to imply the strong Baum-Connes conjecture. In $2004$, Guentner and Kaminker introduced two  numerical invariants \cite{guekam}. The first quantifies how well a group satisfies the Haagerup property, i.e. how well it embeds coarsely and equivariantly into an $L_2$-space. The other quantifies how well the group embeds coarsely, but not necessarily equivariantly, into an $L_2$-space.  The latter invariant links coarse embeddability to the well-studied notion of quasi-isometric embeddability \cite{CTV}.
\begin{defn}
Fix $p\geq 1$. Given a compactly generated locally compact group $G$ and a measure space $(\Omega,\mu)$, the {\bf $L_p$-compression} $R(f)$ of a coarse embedding $f:G\rightarrow L_p(\Omega,\mu)$ is defined as the supremum of $r\in [0,1]$ such that
\[ \exists C,D>0, \forall g,h\in G: \frac{1}{C} d(g,h)^r - D \leq \| f(g)-f(h)\| \leq C d(g,h) .\]
The {\bf equivariant $L_p$-compression} $\alpha_p^*(G)$ of $G$ is defined as the supremum of $R(f)$ taken over all $G$-equivariant coarse embeddings of $G$ into all possible $L_p$-spaces. Taking the supremum of $R(f)$ over all, also non-equivariant, coarse embeddings, leads to the (non-equivariant) {\bf $L_p$-compression} $\alpha_p(G)$ of $G$. One can show that both of these definitions do not depend on the chosen compact generating neighbourhood.
\end{defn}
Equivariant and non-equivariant compression are related to interesting group theoretic properties. 
Indeed, based on a remark by M. Gromov, it was shown that the equivariant and non-equivariant $L_2$-compression are equal for amenable groups (see \cite{CTV}). Moreover, if the equivariant $L_2$-compression of a compactly generated group is $>\frac{1}{2}$ then the group is amenable \cite{CTV}. It is straighforward to show this result for all locally compact second countable groups. The result provides a partial converse to the statement that amenable groups satisfy the Haagerup property. There is a similar statement for the equivariant $L_p$-compression for $p\neq 2$: in \cite{naoper}, the authors show that a finitely generated group is amenable if its equivariant $L_p$-compression is strictly greater than $\frac{1}{\min(2,p)}$. In the non-equivariant setting, we have the result that finitely generated groups with non-equivariant compression $>1/2$ satisfy non-equivariant amenability, i.e. property $A$ (which is equivalent to exactness of the reduced $C^*$-algebra) \cite{guekam}. In this paper, we generalize this result to the setting of locally compact second countable groups (Theorem \ref{theorem:PropertyA}).

For finitely generated groups, 
the non-equivariant Haagerup property, i.e. coarse embeddability into a Hilbert space, implies the Novikov conjecture. This result was suggested by Gromov in \cite{Gromov2}, but he did not give a proof. Around $2000$, in a breakthrough article, Yu showed that such groups satisfy the coarse Baum-Connes conjecture, which is related to the Novikov conjecture \cite{Yu}. Together with Skandalis and Tu, the Novikov conjecture for finitely generated coarsely embeddable groups was proven in \cite{Yu:2}. Later, in \cite{Yu:3}, the authors prove the same result for embeddings into uniformly convex Banach spaces and this is one of the motivations to study embeddings into $L_p$-spaces for $p\neq 2$.


\subsection{Results}
In \cite{Nowak}, P. Nowak gives new geometric characterizations of the Haagerup property by proving that a locally compact second countable group is Haagerup if and only if it admits a proper, affine isometric action on some $L_p([0,1])$ for $1<p<2$. This work was motivated by a question asked by A. Valette (see Section 7.4.2 in \cite{Haagerup}). In \cite{CDH}, the authors show that any group $G$ with the Haagerup property also admits a proper affine isometric action on any $L_p$-space $p\geq 1$. By analysing their methods, we find moreover the lower bound $\alpha_p^*(G)\geq \alpha_2^*(G)/p$ for all $p\geq 1$. Naor and Peres prove that, for finitely generated groups, the equivariant $L_2$-compression is actually minimal among all the other equivariant $L_p$-compressions $(p\geq 1)$ (Lemma $2.3$ in \cite{naoper}). We generalize this result to the locally compact context.
\begin{prop}[See Proposition \ref{prop:l2isminimal}]
For every locally compact, compactly generated group $G$, we have $\alpha^*_p(G)\geq \alpha^*_2(G)$ for all $p\geq 1$.
\label{theorem:2}
\end{prop}
This observation leads to the following corollaries.
%
\begin{corollary}
Let $n\geq 2$. For all $p\geq 1$, we have $\alpha_p^*(SO(n,1))=\max(1/2,1/p)$.
If $p\geq 2$, we have $\alpha_p^*(SU(n,1))=1/2$.
\label{corollary:5} \label{corollary:4}
\end{corollary}
\noindent We refer to Corollaries \ref{corollary:application1} and \ref{corollary:application1bis} in the text for the proofs.

Next, we study locally compact hyperbolic groups. It follows from \cite{BDS} or \cite{Bonk} that these groups embed quasi-isometrically into a finite product of binary trees. The results of R. Tessera \cite{Tesseramain} then imply that locally compact hyperbolic groups have $L_p$-compression equal to $1$ for every $p\geq 1$. Together with proposition \ref{theorem:2}, we obtain the following result which can be found as Corollary \ref{cor:amlochyp} later in this paper.
\begin{corollary}
If $G$ is an amenable locally compact hyperbolic group, then $\alpha_p^*(G)=1$ for every $p\geq 1$.
\label{cor:amco1}
\end{corollary}

After this, we elaborate on Yu's result that finitely generated hyperbolic groups admit proper affine isometric actions, with compression $\geq 1/p$, on an $L_p$-space for $p$ sufficiently large \cite{Yu:1} (this result also follows independently from both the work of B. Nica \cite{Nica} and the work of M. Bourdon \cite{Bourdonmain}). Bourdon shows moreover that $p$ is {\em sufficiently large} when it is strictly larger than the Ahlfors regular conformal dimension of the hyperbolic boundary of the group (see \cite{Bourdonmain}). Yves de Cornulier asked whether a similar result would be valid in the locally compact hyperbolic setting. We answer this question affirmatively. Note that we are allowed to exclude the amenable case by Corollary \ref{cor:amco1}.
\begin{theorem}[See Theorem \ref{theorem:conformal}]
Let $G$ be a non-amenable locally compact non-elementary hyperbolic group and denote the Ahlfors regular conformal dimension of $\partial G$ by $Q$. Then for each $p>Q$, there is a metrically proper affine isometric action of $G$ on a finite $l_p$-direct sum of copies of $L_p(G)$. The linear part of the action is the finite direct sum of the natural translation actions. One can choose the corresponding $1$-cocycle to have compression function $\succeq t^{1/p}$. In particular, we thus have $\alpha_p^*(G)\geq 1/p$. \label{theorem:3}
\end{theorem}
\begin{remark}
Bogdan Nica \cite{Nica} proved that finitely generated hyperbolic groups admit a proper affine isometric action on $L^p(\partial G \times \partial G)$ for $p$ {\em sufficiently large}. The advantage of this result is that the $L_p$-space on which the group acts, is explicitly known: it is $L^p(\partial G \times \partial G)$. The down side is that he does not obtain a nice lower bound on the minimal $p$ for which this is true. Bogdan Nica informed us that it should be possible to generalize his proof to the locally compact hyperbolic groups that contain a finitely generated hyperbolic and cocompact subgroup.
\end{remark}
Pierre Pansu showed that the Ahlfors regular conformal dimension of the boundary of $G=Sp(n,1)$ (and $F_4^{-20}$) equals $4n+2$ (and $22$ respectively) \cite{Pansu}. Based on the observation that $G$ is hyperbolic, our methods thus also provide a new proof for the following. 
\begin{corollary}
The group $G=F_4^{-20}$ for $p>22$ and the groups $G=Sp(n,1)$ for $p>4n+2$ admit proper affine isometric actions on an $L_p$-space such that the associated $1$-cocycle has compression at least $1/p$. 
\end{corollary}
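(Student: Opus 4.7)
The plan is to derive the corollary as a direct application of Theorem \ref{thm:3} (the main theorem about non-amenable locally compact non-elementary hyperbolic groups) to the specific groups $G = Sp(n,1)$ and $G = F_4^{-20}$, once we input the value of the Ahlfors regular conformal dimension of their boundaries supplied by Pansu's theorem.

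First I would verify the hypotheses of Theorem \ref{thm:3}. Both $Sp(n,1)$ and $F_4^{-20}$ are rank-one simple Lie groups, hence they are locally compact (in fact connected Lie groups) and well-known to be hyperbolic, with $\partial Sp(n,1)$ homeomorphic to a quaternionic sphere $S^{4n-1}$ and $\partial F_4^{-20}$ homeomorphic to the octonionic projective space (topologically $S^{15}$); in particular their boundaries are infinite, so the groups are non-elementary. They are also non-amenable: indeed both enjoy Kazhdan's property (T), and being non-compact they cannot be amenable. So all hypotheses of Theorem \ref{thm:3} are satisfied.

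Next I would invoke Pansu's theorem \cite{Pansu} to identify the Ahlfors regular conformal dimension of the respective boundaries as $Q = 4n+2$ in the case of $Sp(n,1)$ and $Q = 22$ in the case of $F_4^{-20}$. Substituting these values of $Q$ into Theorem \ref{thm:3} yields, for every $p > 4n+2$ (respectively every $p > 22$), a metrically proper affine isometric action of $G$ on a finite $l_p$-direct sum of copies of $L_p(G)$, which is itself an $L_p$-space, and whose associated $1$-cocycle has compression function $\succeq t^{1/p}$, giving in particular $\alpha_p^*(G) \geq 1/p$. This is exactly the content of the corollary.

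There is essentially no obstacle here beyond invoking the two ingredients: the only thing to double-check is that the hyperbolicity of $Sp(n,1)$ and $F_4^{-20}$ as \emph{locally compact} groups (in the sense used in this paper, i.e.\ word-hyperbolic with respect to some compact generating set, equivalently properly cocompactly acting on a proper hyperbolic geodesic space) holds; this is immediate since these are the isometry groups (up to finite index) of the quaternionic and octonionic hyperbolic spaces, which are proper CAT($-1$) spaces and thus hyperbolic, and the actions are proper, isometric and cocompact. Therefore the corollary follows without further computation.
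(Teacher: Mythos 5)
Your proposal is correct and follows exactly the paper's own route: the corollary is obtained by applying Theorem \ref{thm:3} to $Sp(n,1)$ and $F_4^{-20}$ after using Pansu's computation that the Ahlfors regular conformal dimensions of their boundaries are $4n+2$ and $22$ respectively. Your additional verification of the hypotheses (hyperbolicity, non-elementarity, non-amenability) is accurate and merely makes explicit what the paper leaves implicit.
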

In \cite{Valette}, the authors' main result is that $Sp(n,1)$ (and $F_4^{-20}$) admit a proper affine isometric action on $L_p(Sp(n,1))$ ($L_p(F_4^{-20})$ respectively) for $p>4n+2$ ($p>22$ respectively). The method in \cite{Valette} does not (and cannot) provide any lower bound for the compression of the $L_p$-cocycle.

Finally, we turn to the class of second countable, locally compact groups with property $(A)$. Similarly to the generalization of hyperbolicity from the discrete to the locally compact world, there has been growing interest in the notion of property $(A)$ for locally compact second countable groups (see e.g. \cite{DL1},\cite{DL2}). By a result of Guentner and Kaminker \cite{guekam}, it is known that a finitely generated group with compression $>1/2$ has property $(A)$. We generalize this result to all locally compact second countable groups in Theorem \ref{theorem:PropertyA}:
\begin{theorem}
 Let $G$ be a locally compact second countable group and let $d$ be a proper, left-invariant metric on $G$, generating the topology and with exponentially controlled growth of balls (see \ref{def:expcongro}). If $\alpha(G)>1/2$ then $G$ has property A.
\end{theorem}

\section{Preliminaries}
\subsection{Visual metrics}
\begin{convention}
Throughout this section, $G$ is a locally compact, hyperbolic group, equipped with the word length metric relative to a compact generating subset. Throughout this paper, $(X,d)$ will always denote a proper (i.e. closed balls are compact) hyperbolic geodesic metric space on which $G$ acts properly and cocompactly by isometries. We denote the hyperbolicity constant of $X$ by $\delta$. 
\end{convention}
Similarly to the finitely generated case, one defines the hyperbolic boundary $\partial G$ of $G$ as the hyperbolic boundary $\partial X$ of $(X,d)$, i.e. as the set of equivalence classes of geodesic rays in $X$ where two rays are said to be {\bf equivalent} if they lie at bounded Hausdorff distance from each other. Note that, as $G$ acts by isometries on $X$, and thus maps geodesic rays to geodesic rays, one obtains a natural $G$-action on $\partial X=\partial G$. It is well known that the boundary of $X$, and thus of $G$, is a compact, perfect set, metrizable by a {\em visual metric}. 
\begin{defn}
A metric $\rho$ on the hyperbolic boundary of a proper hyperbolic geodesic metric space $(X,d)$ is called {\bf visual} if 
\begin{enumerate}
\item $\rho$ induces the canonical boundary topology on $\partial X$
\item there exist $x_0\in X, C>0$ and $a>1$ such that
for any two distinct $\xi,\xi'\in \partial X$ and for any bi-infinite geodesic $\gamma$ in $X$ connecting $\xi$ to $\xi'$ and any $y\in \gamma$ with $d(x_0,y)=d(x_0,\gamma)$ we have
\[ \frac{1}{C} a^{-d(x_0,y)} \leq \rho(\xi,\xi')\leq C a^{-d(x_0,y)} .\]
\end{enumerate}
\label{defn:visualmetric}
We sometimes denote $\rho=d_a$ (or $d_{a,C}$) if we need to emphasize the value of $a$ (and/or C).
\end{defn}

It is a standard fact that two visual metrics $d_a,d_b$ on $\partial X$ are H\"older equivalent \cite{Kapovich}. Stronger even, there exist $C>0$ and $\alpha>0$ such that for any $\xi,\xi'\in \partial X:$
\[ d_b(\xi,\xi')^\alpha/C \leq d_a(\xi,\xi') \leq C d_b(\xi,\xi')^\alpha. \]

\subsection{Q.I.-embedded free subgroups of locally compact hyperbolic groups}
Given a locally compact hyperbolic group $G$, it follows from \cite{Gromov:1} that its boundary is either empty, finite or uncountable. In the latter case, we say that $G$ is {\bf non-elementary} hyperbolic. Following \cite{Gromov:1}, this case has two important subcases:
\begin{enumerate}
\item The action of $G$ on its boundary is called {\bf focal} if and only if $G$ fixes a point $\xi\in \partial X$. In this case, $\{\xi \}$ is the unique finite orbit of the action.
\item The action of $G$ on its boundary is called {\bf of general type} if it has no finite orbit.
\end{enumerate}
Lemma 5.2 and 5.3 in \cite{Caprace} show that a non-elementary locally compact hyperbolic group is amenable if and only if the action on its boundary is focal. An application of the ping-pong lemma (see \cite{Gromov:1}, 8.2.E, 8.2.F) yields the following result.
\begin{lemma}[Lemma 3.3 in \cite{Caprace}]
Every non-elementary locally compact hyperbolic group $G$ contains a quasi-isometrically embedded copy of the free semigroup on two generators. If the action of $G$ on its boundary is of general type, then G contains a quasi-isometrically embedded copy of the free group on two generators.
\label{lm:qiF2}
\end{lemma}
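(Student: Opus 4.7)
The plan is to execute the classical Gromov ping-pong strategy (8.2.E, 8.2.F in \cite{Gromov:1}), splitting along the general type / focal dichotomy. The common ingredient is an ample supply of \emph{loxodromic} elements: elements $g \in G$ whose action on $X$ has a well-defined pair of distinct attracting and repelling fixed points $(g^+, g^-) \in \partial X \times \partial X$ with the usual north--south dynamics on $\partial X$. Since $G$ is non-elementary and acts cocompactly on the proper hyperbolic space $X$, any element of sufficiently large translation length is loxodromic, and non-elementarity forces the collection of fixed point pairs so obtained to be uncountable.

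In the general type case, the absence of finite orbits lets me conjugate a loxodromic $a$ by some $g \in G$ to produce a second loxodromic $b := g a g^{-1}$ with $\{a^+, a^-\} \cap \{b^+, b^-\} = \emptyset$. Picking pairwise disjoint neighborhoods $U_a^\pm, U_b^\pm$ of the four fixed points, north--south dynamics gives, for $N$ large, $a^{\pm N}(\partial G \setminus U_a^\mp) \subset U_a^\pm$ and analogously for $b$; the two-sided ping-pong lemma then yields $\langle a^N, b^N \rangle \cong F_2$. In the focal case, the unique boundary fixed point $\xi$ is fixed by all of $G$ and hence occurs as one of the two fixed points of every loxodromic element. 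After possibly replacing by inverses, I may assume $\xi = a^+ = b^+$ with distinct repelling points $a^-, b^-$ (the existence of a second loxodromic with distinct non-$\xi$ fixed point uses that $G$ itself has $\xi$ as its only common boundary fixed point, by non-elementarity). Choose disjoint neighborhoods $U_a \ni a^-$, $U_b \ni b^-$, and $V \ni \xi$ with $V \cap (U_a \cup U_b) = \emptyset$; for $N$ large, $a^N(\partial G \setminus U_a) \subset V$ and $b^N(\partial G \setminus U_b) \subset V$. A one-sided ping-pong then shows that any nonempty reduced positive word in $a^N, b^N$ sends a point outside $U_a \cup U_b$ into $V$ and is therefore nontrivial, so that $a^N, b^N$ freely generate a subsemigroup of rank two.

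To upgrade the inclusion to a quasi-isometric embedding, I use that $a^N$ and $b^N$ act as hyperbolic isometries on $X$ with strictly positive stable translation lengths and well-separated axes. A standard axis-tracking argument in hyperbolic geometry shows that for any reduced word $w$ of length $n$ in the generators of the free (semi)group, the displacement $d(x_0, w x_0)$ is bounded below by a positive linear function of $n$; combined with the trivial upper bound and the \v{S}varc--Milnor lemma (which identifies $G$ with $X$ up to quasi-isometry), this gives $|w|_G \asymp n$, the required quasi-isometric embedding. The main obstacle is the focal case: one must ensure, in a possibly non-discrete, even totally disconnected, setting, that loxodromic elements with distinct non-$\xi$ fixed points exist in sufficient abundance, and this is exactly the point that \cite{Caprace} extracts from the non-elementarity assumption.
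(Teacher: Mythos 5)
This statement is not proved in the paper at all: it is quoted verbatim as Lemma 3.3 of \cite{Caprace}, with only the remark that it follows from the ping-pong lemma (\cite{Gromov:1}, 8.2.E, 8.2.F). So your proposal is being measured against that source rather than against an argument in this text. Your general-type half is the standard two-sided ping-pong and is fine in outline, modulo the imprecision that a large displacement $d(x_0,gx_0)$ does not by itself make $g$ loxodromic (one needs positive \emph{stable} translation length, or one must produce loxodromic elements directly from the general-type dynamics), and modulo checking that a conjugate with fully disjoint fixed-point pair actually exists.

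The genuine gap is in the focal case. With $a^+=b^+=\xi$ and your sets $U_a\ni a^-$, $U_b\ni b^-$, $V\ni\xi$, the conclusion you draw --- every nonempty positive word sends a point of $\partial G\setminus(U_a\cup U_b\cup V)$ into $V$ and is therefore nontrivial --- does not prove that $a^N,b^N$ generate a \emph{free} subsemigroup. ``All nonempty positive words are nontrivial'' is strictly weaker than freeness: in $\mathbb{Z}$ the subsemigroup generated by $1$ and $2$ satisfies the former but not the latter. And in the focal case freeness of the semigroup is precisely the delicate point, since the \emph{group} generated by $a^N,b^N$ lies in an amenable group (focal implies amenable, by Lemmas 5.2--5.3 of \cite{Caprace} as quoted in the paper) and so is certainly not free; nothing cheap can be expected to work. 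The repair is to reverse the dynamics: use $a^{-N},b^{-N}$, whose attracting points $a^-,b^-$ are distinct and whose common repelling point is $\xi$. Choosing disjoint $A\ni a^-$ and $B\ni b^-$, both avoiding a neighbourhood of $\xi$, one gets $a^{-N}(A\cup B)\subseteq A$ and $b^{-N}(A\cup B)\subseteq B$, and the standard semigroup ping-pong (distinct positive words are separated by their first letters, then one inducts using left cancellation in $G$) yields freeness. Your quasi-isometric embedding step (axis tracking / local-to-global for broken geodesics, plus \v{S}varc--Milnor) is standard once the correct ping-pong configuration is in place.
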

The focal case has been characterized and turns out to be quite restricted (see Theorem A in \cite{Caprace}).
Using the standard fact that $\alpha_p^*(F_2)=\max(1/2,1/p)$ for $p\geq 1$ (see \cite{naoper}), we immediately obtain the following corollary.
\begin{corollary} \label{cor:upp}
If $G$ is a non-elementary and non-amenable locally compact hyperbolic group, then $\alpha_2^*(G)\leq \max(1/2,1/p)$.
\end{corollary}

\section{The (equivariant) $L_2$-compression of locally compact compactly generated groups is minimal}
It is known that $L_2$ embeds isometrically into $L_p$ for any $p$ (see \cite{Benyamini}, page 189) so the non-equivariant $L_2$-compression is minimal among all $L_p$-compressions. The equivariant case is harder to solve.
\begin{remark}
A first approach to tackle the equivariant case could be to quantify the results in \cite{CDH}. This leads to the lower bound $\alpha_p^*(G)\geq \alpha_2^*(G)/2$ for all $p\geq 1$ and all compactly generated locally compact groups $G$. We give a brief overview of how we obtained this bound. For underlying definitions, we refer the reader to \cite{CDH}.

Given a proper affine isometric action of a group $G$ on a Hilbert space $\mathcal{H}$, let $b:G\rightarrow \mathcal{H}$ be the orbit map of $0\in \mathcal{H}$. Assume that there is some $\alpha>0$ such that $\|b(g)\|\geq \frac{1}{C} l(g)^\alpha$ for all $g\in G$ and where $l$ is the word length of $G$ with respect to a compact generating subset. It is well known that $\psi:(g,g')\mapsto \|b(g)-b(g')\|^2$ is a so called {\em conditionally negative definite kernel}. From Corollary 6.18 and Lemma 6.15 in \cite{CDH}, it follows that there is a {\em structure of space with measured walls $(X,W,B,\mu)$} and a point $x_0\in X$ such that $\mu(W(gx_0\mid g'x_0))=\sqrt{\psi(g,g')}\geq \frac{1}{C}l(g^{-1}g')^\alpha$, for all $g,g'\in G$. Next,
from their Lemma 3.10, we deduce that for every $p\geq 2$, we have $\alpha_p^*(G)\geq \alpha/p$. Taking the supremum over $\alpha$, we obtain $\alpha_p^*(G)\geq \alpha_2^*(G)/p$ for every $p\geq 1$.
\end{remark}
Naor and Peres' Lemma $2.3$ in \cite{naoper} states that for every finitely generated group $G$ and any $p\geq 1$, one actually has $\alpha^*_p(G)\geq \alpha^*_2(G)$. We will use the following lemma to generalize their result.
\begin{lemma}
For any compact $N\triangleleft G$, we have $\alpha^*_2(G/N)=\alpha^*_2(G)$.
\label{lm:quotient}
\end{lemma}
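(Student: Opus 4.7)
Both inequalities must be established. The direction $\alpha_2^*(G)\geq \alpha_2^*(G/N)$ is already contained in the argument preceding the lemma (specialised to $p=2$): any affine isometric action of $G/N$ with compression exponent $\alpha$ pulls back along the quotient map $G\twoheadrightarrow G/N$ to an affine isometric action of $G$ whose cocycle $b$ satisfies $\|b(g)\|\geq C^{-1}(l(g)-M)^\alpha-D$, where $M$ comes from the inclusion $N\subseteq K^M$. Since the additive constant $M$ is harmless, this transfers the compression. The content of the lemma is therefore the reverse inequality.

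For $\alpha_2^*(G/N)\geq\alpha_2^*(G)$ the strategy is averaging over the compact group $N$. Start with an affine isometric action $\chi=\pi+b$ of $G$ on a Hilbert space $\mathcal{H}$ whose cocycle $b$ witnesses a compression exponent $r$ close to $\alpha_2^*(G)$. Fix a normalised Haar measure on $N$ and observe, by a standard Bruhat-Tits type computation using left-invariance, that for any $w\in\mathcal{H}$ the vector $v:=\int_N\chi(n)w\,dn$ is $N$-fixed. Hence the closed affine set $\mathcal{H}^N$ of $N$-fixed points is non-empty. Normality of $N$ in $G$ forces $\mathcal{H}^N$ to be $G$-invariant: for $v\in\mathcal{H}^N$, $n\in N$, $g\in G$ one has $\chi(n)\chi(g)v=\chi(g)\chi(g^{-1}ng)v=\chi(g)v$. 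As $N$ acts trivially on $\mathcal{H}^N$ by construction, the restricted action factors through $G/N$, yielding a continuous affine isometric action of $G/N$ on $\mathcal{H}^N$.

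To compare compressions, pick any $v\in\mathcal{H}^N$ and introduce the cocycle $b'(g):=\chi(g)v-v$ of the same action based at $v$. A direct computation gives $b(g)-b'(g)=v-\pi(g)v$, so $\|b(g)-b'(g)\|\leq 2\|v\|$ uniformly in $g$. Thus the lower bound $\|b(g)\|\geq C^{-1}l(g)^r-D$ transfers to $\|b'(g)\|\geq C^{-1}l(g)^r-D-2\|v\|$. Since $b'(gn)=\chi(g)\chi(n)v-v=b'(g)$ for $n\in N$, the cocycle $b'$ descends to a continuous $1$-cocycle $\widetilde{b}$ for $G/N$ on the Hilbert space $\mathcal{H}^N-v$. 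Evaluating on a representative $g_0\in gN$ of minimal length gives $l(g_0)=l'(gN)$, and the bound becomes $\|\widetilde{b}(gN)\|\geq C^{-1}l'(gN)^r-D'$. The Lipschitz upper bound is automatic from the cocycle identity applied to the compact generating set, so taking the supremum over $r$ yields $\alpha_2^*(G/N)\geq\alpha_2^*(G)$.

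The only delicate point to watch is the asymmetry $l'(gN)\leq l(g)$: a naive substitution would produce a bound in the wrong direction, and the remedy is to systematically evaluate $\widetilde{b}$ on a minimal-length representative of each coset, which is where $l$ and $l'$ genuinely agree. Everything else merely uses compactness of $N$ (for Haar-measure averaging) and normality of $N$ in $G$ (for $G$-invariance of $\mathcal{H}^N$), so no further obstacle arises.
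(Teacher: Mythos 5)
Your argument is correct and follows essentially the same route as the paper: average over the compact group $N$ to produce an $N$-fixed point, use normality to make the fixed-point set $G$-invariant so the action descends to $G/N$, and compare the cocycles up to the additive error $2\|v\|$. The one "delicate point" you flag is actually a non-issue: since $l'(gN)\leq l(g)$ for \emph{every} representative, the lower bound $C^{-1}l(g)^r-D'$ is automatically $\geq C^{-1}l'(gN)^r-D'$, so no minimal-length representative is needed.
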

\begin{proof}
The inequality $\alpha^*_2(G/N)\leq \alpha^*_2(G)$ follows from the fact that any affine isometric action of $G/N$ on a Hilbert space gives rise to an affine isometric action of $G$ by precomposing with the quotient map $G \twoheadrightarrow G/N$. 
Conversely, to show that $\alpha^*_2(G/N)\geq \alpha^*_2(G)$, note that the action of the compact subgroup $N$ has a fixed point. Translating this point to coincide with the $0$-element of our Hilbert space and starting with a proper affine isometric action of $G$ on a Hilbert space, we obtain a proper affine isometric action of $G$ whose kernel contains $N$. The induced action of $G/N$ comes with a $1$-cocycle leading to the conclusion $\alpha^*_2(G/N)\geq \alpha^*_2(G)$.
\end{proof}
\begin{remark}
More generally, $\alpha^*_p(G/N)\leq \alpha^*_p(G)$. The converse inequality can not be generalized because our proof implicitly uses that a closed subspace of an $L_2$-space is again an $L_2$-space.
\end{remark}
%
\begin{prop}
For every locally compact, compactly generated group $G$, equipped with the word length distance relative to a compact generating subset, and for any $p\geq 1$, we have $\alpha^*_p(G)\geq \alpha^*_2(G)$.
\label{prop:l2isminimal}
\end{prop}
\begin{proof} 
It is well known (see \cite{KK}) that any locally compact, compactly generated group $G$ has a compact normal subgroup $N$ such that $G/N$ is separable. By Lemma \ref{lm:quotient}, we can thus assume that $G$ is separable and so for the purpose of calculating equivariant Hilbert space compression, we can assume that our Hilbert space is $l^2(\Z)$. 
This fact turns out to be sufficient in order for the proof of Naor and Peres (Lemma $2.3$ in \cite{naoper}) to hold and so we conclude. 
\end{proof}
Combining this with material on measured walls spaces as explained in the author's thesis \cite{Dreesenthesis}, we obtain the following corollaries.
\begin{corollary}
For $p\geq 2, n\geq 1$, we have $\alpha_p^*(SU(n,1))=1/2$.
\label{corollary:application1}
\end{corollary}
\begin{proof}
By Corollary \ref{cor:upp}, we have $\alpha_p^*(SU(n,1))\leq \max(1/2,1/p)$ for every $p\geq 1$. 
As in Example $5.3.5$ in \cite{Dreesenthesis}, one obtains the lower bound $\alpha_2^*(SU(n,1))\geq 1/2$. We use Proposition \ref{prop:l2isminimal} to conclude.
\end{proof}

%
We can prove a stronger result for $G=SO(n,1)$.
\begin{corollary}
For every $p\geq 1, n\geq 2$, we have $\alpha_p^*(SO(n,1))=\max(1/2,1/p)$.
\label{corollary:application1bis}
\end{corollary}
\begin{proof}
As in Example $5.3.5$ of \cite{Dreesenthesis}, we get that $\alpha_p^*(SO(n,1))\geq 1/p$. We conclude by combining this with Proposition \ref{prop:l2isminimal} and Corollary \ref{cor:upp}.

\end{proof}
\begin{remark}
We are unable to proof Corollary \ref{corollary:application1} for $1\leq p \leq 2$ because there is no known alternative to a result by Robertson, linking the (real) hyperbolic distance $d(x,y)$ to the measure of the collection of half-spaces containing $x$ but not $y$ (see Corollary 2.5 in \cite{Robertson} or Corollary 5.3.1 in \cite{Dreesenthesis}).
\end{remark}
Proposition \ref{prop:l2isminimal} also allows us to calculate the equivariant $L_p$-compressions of amenable hyperbolic groups. In fact, locally compact hyperbolic groups embed quasi-isometrically into a finite product of binary trees. This can be deduced from Theorem 1.2 in \cite{BDS} or from Theorem 1.1 in \cite{Bonk}, combined with the fact that hyperbolic $n$-space embeds isometrically into a product of binary trees. In \cite{Tesseramain} (see Theorems 3 and 4), R. Tessera (see \cite{Tesseramain}) studied the embeddability behaviour of trees into $L_p$-spaces. Using his results, it follows that $\alpha_p(G)=1$ for every $p\geq 1$ and every locally compact hyperbolic group $G$.

\begin{corollary}
If $G$ is an amenable locally compact hyperbolic group, then $\alpha_p^*(G)=1$ for every $p\geq 1$.
\label{cor:amlochyp}
\end{corollary}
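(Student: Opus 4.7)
The plan is to combine three ingredients already available in the excerpt: Theorem~\ref{thm:nonequivariant} to produce nearly-linear non-equivariant $L_2$-embeddings, the Gromov--CTV equality $\alpha_2^*=\alpha_2$ for amenable compactly generated locally compact groups \cite{CTV}, and Theorem~\ref{thm:l2isminimal} to transport the conclusion from $p=2$ to arbitrary $p\geq 1$.

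To execute the first step, I would apply Theorem~\ref{thm:nonequivariant} at $p=2$ to the family $\rho_\epsilon(t):=t^{1-\epsilon}$ indexed by $\epsilon>0$. The hypothesis becomes
\[ \int_1^\infty\Bigl(\frac{\rho_\epsilon(t)}{t}\Bigr)^2\frac{dt}{t} \;=\; \int_1^\infty t^{-2\epsilon-1}\,dt \;=\;\frac{1}{2\epsilon}<\infty, \]
so for each $\epsilon>0$ there is a coarse embedding of $G$ into an $L_2$-space with compression function equivalent to $t^{1-\epsilon}$. Letting $\epsilon\downarrow 0$ yields $\alpha_2(G)=1$.

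The amenability hypothesis now lets us upgrade to the equivariant setting: by \cite{CTV} we have $\alpha_2^*(G)=\alpha_2(G)=1$. Applying Theorem~\ref{thm:l2isminimal} gives $\alpha_p^*(G)\geq \alpha_2^*(G)=1$ for every $p\geq 1$, and the trivial bound $\alpha_p^*(G)\leq 1$ forces equality. The only step that is not an immediate citation is the invocation of \cite{CTV} in the compactly generated locally compact (rather than purely finitely generated) setting; however, this is the statement already quoted in the introduction above, and it follows by a standard F\o lner-averaging argument applied to a sequence of continuous, nearly optimal coarse $L_2$-embeddings, using that a compactly generated locally compact group is $\sigma$-compact so that continuity of the resulting $1$-cocycle is preserved. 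I do not anticipate any genuine obstacle in the argument.
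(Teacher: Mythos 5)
Your argument is correct and follows the paper's own proof exactly: apply Theorem~\ref{thm:nonequivariant} to get $\alpha_2(G)=1$, use the amenable equality $\alpha_2^*(G)=\alpha_2(G)$ from \cite{CTV}, and then transport to all $p$ via Theorem~\ref{thm:l2isminimal}. The only difference is that you spell out the choice of test functions $t^{1-\epsilon}$ and flag the locally compact version of the \cite{CTV} result, which the paper simply cites without comment (the result in \cite{CTV} is indeed stated for compactly generated locally compact groups).
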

\begin{proof}
As $G$ is amenable, we have that $\alpha_2^*(G)=\alpha_2(G)$ \cite{CTV}, hence $\alpha_2^*(G)=1$. Combining this with Proposition \ref{prop:l2isminimal}, we conclude that $\alpha_p^*(G)=1$ for every $p\geq 1$.
\end{proof}
\section{Proper affine isometric actions of locally compact hyperbolic groups on $L_p$-spaces: a first result \label{sc:easierproof}}
With respect to the Haagerup property, there exist discrete hyperbolic groups on both ends of the spectrum. Similarly, in the non-discrete, locally compact case, we have locally compact hyperbolic groups which are Haagerup, e.g. $SO(n,1)$ and others which have property $(T)$ and are thus far from being Haagerup, e.g. $Sp(n,1)$. Although locally compact hyperbolic groups that are not Haagerup do {\em not} admit a proper affine isometric action on any $L_2$-space, we now show that they {\em do} admit a proper affine isometric action on an $L_p$-space for $p$ sufficiently large. We will henceforth write $L_p(G)=L_p(G,\mu)$ where $\mu$ is the right Haar measure on $G$.
\begin{theorem}
Let $G$ be a non-amenable locally compact non-elementary hyperbolic group. Then $G$ admits a metrically proper affine isometric action on a finite direct sum of copies of $L_p(G)$ when $p$ is sufficiently large. The linear part of the action is the direct sum of the natural translation actions on the $L_p(G)$. One can choose the corresponding $1$-cocycle to have compression function $\succeq t^{1/p}$, so the compression of the associated $1$-cocyle is greater than $1/p$. \label{theorem:maineasy} \label{theorem:easierresult}
\end{theorem}
Our proof uses ideas from \cite{Bourdonmain} and consists of three parts. First, we formally define a map $c$ on $G$ which satisfies the $1$-cocycle relation with respect to the natural right translation action $\rho$ on $L_p(G,\mu)$. Here, the hardest part is to show that for some $p$ large enough, we have $c(g)\in L_p(G,\mu)$ for every $g\in G$. As a second step, we show that $c$ is also continuous: this is trivially true in the discrete case, but makes crucial use of Proposition 5.10 of \cite{Caprace} for the locally compact setting. The final step consists of showing that $c$ is also proper. Here, basically the proof is obtained by replacing sums by integrals in the proof for the discrete case, see Section $2.3$ in \cite{Bourdonmain}. We merely mention this proof for completeness.

The proof that we present gives a concrete number $N$, in terms of properties of $G$, such that $G$ admits a metrically proper affine isometric action on an $L_p$-space for all $p\geq N$. We refer the reader to Remark \ref{remark:valueofN} for details. The discussion on page 5 of \cite{Mackay} indicates that the lower bound $N$ obtained here is greater (i.e. less good) than the one we will obtain in Theorem \ref{theorem:maindifficult}. The key to improve the lower bound will be to strengthen Lemma \ref{lm:twerkt} below and we will do this in Section \ref{sc:harder}. We give the proof of Theorem \ref{theorem:maineasy} because it is easier than that of Theorem \ref{theorem:maindifficult}, because most parts can be reused in Section \ref{sc:harder}, because it requires no additional preliminaries and because the obtained lower bound $N$ is often easier to calculate than the Ahlfors regular conformal dimension.\\

\begin{proof}[\bf The proof of Theorem \ref{theorem:easierresult}.]
Let $X$ be a proper geodesic hyperbolic metric space admitting a proper cocompact $G$-action by isometries. From Proposition 5.10 in \cite{Caprace}, we conclude that modulo a compact normal subgroup $W<G$, we have that $G$ is totally disconnected or that it is the full isometry group (or its identity component) of a rank 1 symmetric space of noncompact type. 
So, in the former case, we can take $X$ to be a Schreier graph (the set of vertices is $\{\gamma V\mid \gamma \in G\}$ where $V$ is an open compact subgroup of $G$; see the remarks on the bottom of page 2 in \cite{Caprace} for further details). In the latter case we can take $X$ to be a rank 1 symmetric space of noncompact type. As $W\triangleleft G$ is compact, we have $\alpha_p^*(G)\geq \alpha_p^*(G/W)$ for every $p\geq 1$: indeed any affine isometric action of $G/W$ on an $L_p$-space gives rise to one of $G$ by precomposing with the projection map $G\twoheadrightarrow G/W$. We are thus allowed to henceforth assume $G:=G/W$.

As in Definition \ref{defn:visualmetric}, let $d_a$ be a visual metric on $\partial X$ and $x_0\in X$.
Choose a Lipschitz function $u$ on $\partial X$ (for concreteness, let us choose a base point $\xi_0\in \partial X$ and set $u:\xi \mapsto d_a(\xi_0,\xi)$). Define a function $f_u:G \rightarrow \R$ as follows: set $f_u:g \mapsto u(\xi_{g})$ where $\xi_{g}$ is any element in the boundary of $X$ corresponding to a geodesic ray starting in $x_0$ through $gx_0$. In the totally disconnected case, the stabilizer of $x_0$ contains an open subset and so $f_u$ is continuous. In the rank 1 case, $f_u$ is continuous outside of the stabilizer of $x_0$, which is a measure 0 set $S$. Hence, by modifying $f_u$ on a measure $0$ set, we can assume that $f_u$ is measurable.

Consider the standard right translation action $\rho$ on $L_p(G,\mu)$, i.e. for every $g\in G$ and $f\in L_p(G,\mu)$ we set $\rho(g)(f):G \rightarrow \R, \gamma \mapsto f(\gamma g)$. We formally define a $1$-cocycle with respect to $\rho$ as follows: 
\[ c:=g \in G \mapsto \rho(g)(f_u)-f_u .\]
We first show that $c(g)\in L_p(G,\mu)$ for every $g \in G$ and $p$ sufficiently large. 

\begin{lemma}
For $p$ sufficiently large, we have $\forall g\in G: \ c(g)\in L_p(G,\mu)$.
\label{lm:twerkt}
\end{lemma}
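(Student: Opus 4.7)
The plan is to bound $c(g)(\gamma)=u(\xi_{\gamma g})-u(\xi_\gamma)$ pointwise in terms of the visual distance between the two boundary points, translate this into a bound in the $G$-metric via the quasi-isometry $\gamma\mapsto\gamma x_0$, and then integrate against the Haar measure using the (at most) exponential volume growth of $G$.

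First I would observe that the chosen $u(\xi)=d_a(\xi_0,\xi)$ is $1$-Lipschitz on $(\partial X,d_a)$, so that
\[ |c(g)(\gamma)| \leq d_a(\xi_{\gamma g},\xi_\gamma). \]
Then I would invoke the visual metric estimate $d_a(\xi,\xi')\leq C a^{-(\xi,\xi')_{x_0}}$, where $(\cdot,\cdot)_{x_0}$ is the Gromov product (this is standard and equivalent to Definition~\ref{defn:visualmetric}). The key geometric input is a lower bound on $(\xi_\gamma,\xi_{\gamma g})_{x_0}$. By the thin-quadrilateral form of hyperbolicity,
\[ (\xi_\gamma,\xi_{\gamma g})_{x_0} \geq \min\bigl\{(\xi_\gamma,\gamma x_0)_{x_0},\,(\gamma x_0,\gamma g x_0)_{x_0},\,(\gamma g x_0,\xi_{\gamma g})_{x_0}\bigr\}-2\delta. \]
Since $\gamma x_0$ (resp.\ $\gamma g x_0$) lies on the ray from $x_0$ to $\xi_\gamma$ (resp.\ $\xi_{\gamma g}$), the first and third terms equal $d(x_0,\gamma x_0)$ and $d(x_0,\gamma g x_0)$. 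The middle term satisfies
\[ (\gamma x_0,\gamma g x_0)_{x_0}\geq \min\bigl(d(x_0,\gamma x_0),d(x_0,\gamma g x_0)\bigr)-\tfrac12 d(x_0,gx_0), \]
by the definition of the Gromov product and $G$-isometry $d(\gamma x_0,\gamma g x_0)=d(x_0,gx_0)$. Combining and using $|d(x_0,\gamma x_0)-d(x_0,\gamma g x_0)|\leq d(x_0,gx_0)$ yields
\[ (\xi_\gamma,\xi_{\gamma g})_{x_0}\geq d(x_0,\gamma x_0)-\tfrac{3}{2}d(x_0,gx_0)-2\delta. \]
Thus $|c(g)(\gamma)|\leq C(g)\,a^{-d(x_0,\gamma x_0)}$, with $C(g)$ depending only on $g$.

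Next, using the quasi-isometry estimate \eqref{eq:rescaling}, $d(x_0,\gamma x_0)\geq \tfrac{1}{A}d_G(1,\gamma)-B$, so
\[ \|c(g)\|_p^p \;\leq\; C(g)^p a^{pB}\int_G a^{-(p/A)\,d_G(1,\gamma)}\,d\mu(\gamma). \]
To bound this integral I would split $G$ into the annuli $A_n=\{\gamma:n\leq d_G(1,\gamma)<n+1\}$. A compactly generated locally compact group has at most exponential volume growth, so there exist constants $C'>0$ and $V>0$ with $\mu(A_n)\leq C' e^{Vn}$. Hence the integral is dominated by $\sum_n C' e^{Vn} a^{-pn/A}$, which converges as soon as
\[ p \;>\; \frac{A\,V}{\log a}. \]
For such $p$ we conclude $c(g)\in L_p(G,\mu)$ for every $g\in G$, giving an explicit value of $N$ for the later Remark~\ref{remark:valueofN}.

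The main obstacle is the Gromov-product estimate in the second step: one must control $(\xi_\gamma,\xi_{\gamma g})_{x_0}$ uniformly in $\gamma$, with the dependence on $g$ entering only through the single constant $d(x_0,gx_0)$. Everything else (the $1$-Lipschitz trick, pushing forward via the quasi-isometry, and summing the exponential tail against the exponential volume growth) is routine once this geometric lemma is in place.
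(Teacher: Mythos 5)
Your proof is correct and follows essentially the same route as the paper's: a Lipschitz bound on $u$, exponential decay of the visual distance $d_a(\xi_{\gamma g},\xi_\gamma)$ in terms of $d(x_0,\gamma x_0)$ with the dependence on $g$ isolated in a constant (you obtain the same exponent $d(x_0,\gamma x_0)-\tfrac{3}{2}d(x_0,gx_0)$ via the four-point Gromov product inequality where the paper argues with limits of geodesics $[x_\gamma^i,x_{\gamma g}^i]$), transfer through the quasi-isometry, and summation of an exponential tail against exponential volume growth. The only cosmetic difference is that you quote the at-most-exponential growth of compactly generated locally compact groups as a standard fact, whereas the paper proves it on the spot by covering $K^m$ with $n^{m-1}$ right translates of $K$, which is what makes the threshold $p>A\ln(n)/\ln(a)$ of Remark \ref{remark:valueofN} explicit.
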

\begin{proof}
Let $a,C\in \R$ denote constants as in Definition \ref{defn:visualmetric}. 
Let $K\subset G$ be a compact neighbourhood of $1$ that generates $G$. Consider the cover $(K\gamma)_{\gamma\in K^2}$ of $K^2$. Because of compactness, we can derive a finite subcover of $K^2$, say $K\gamma_1,K\gamma_2,\ldots ,K\gamma_n$. It is easy to see that $K^3$ can be covered by the $(K\gamma_i \gamma_j)_{i,j=1,2,\ldots n}$. Similarly, 
$K^m$ is covered by $n^{m-1}$ right translates of $K$.

Fixing $g\in G$ and neglecting the measure $0$ set $S$ mentioned earlier, we write
\begin{eqnarray*}
\| c(g) \|_p^p & = &
\sum_{i=1}^\infty \int_{K^i\backslash K^{i-1}} \lvert f_u(\gamma g)-f_u(\gamma) \rvert^p d\mu \\
& \leq & \sum_{i=1}^\infty \max_{\gamma \in K^i\backslash K^{i-1}} (\lvert f_u(\gamma g)-f_u(\gamma) \rvert^p) \mu(K^i\backslash K^{i-1}) \\
& \leq & \sum_{i=1}^\infty \max_{\gamma \in K^i\backslash K^{i-1}} (\lvert f_u(\gamma g)-f_u(\gamma) \rvert^p) n^{i-1} \mu(K).
\end{eqnarray*} 
By definition,
\[ \lvert f_u(\gamma g)-f_u(\gamma)\rvert = \lvert u(\xi_{\gamma g})-u(\xi_\gamma) \rvert \leq  d_a(\xi_{\gamma g},\xi_{\gamma}) \leq  C a^{-d(x_0,y_0)},\]
where $y_0\in (\xi_{\gamma g},\xi_{\gamma})\subset X$ lies at minimal distance from $x_0$. In order to bound $\lvert f_u(\gamma g)-f_u(\gamma)\rvert$, we look for a lower bound on $d(x_0,y_0)$.

For $h\in \{\gamma g, \gamma\}$, we let $r_h$ 
be a geodesic ray in the class of $\xi_h$ which starts in $x_0$ and and goes through $hx_0$. If $x_h^i$ 
is the point on $(hx_0,\xi_h)$ at distance $i$ from $hx_0$, then a bi-infinite geodesic $(\xi_{\gamma g},\xi_{\gamma})$ can be obtained as 
the limit of geodesics $[x_\gamma^i,x_{\gamma g}^i]$ as $i\to \infty$. These
geodesics do not pass through the ball of radius 
\begin{eqnarray*}
\min(d(x_0,\gamma g x_0),d(x_0,\gamma x_0)) -d(\gamma gx_0,\gamma x_0)/2 &=& \min(d(x_0,\gamma g x_0),d(x_0,\gamma x_0))\\
& &  -d(gx_0,x_0)/2.
\end{eqnarray*}
around $x_0$. As $d(x_0,\gamma g x_0)\geq d(x_0, \gamma x_0) - d(x_0, g x_0 )$, the geodesics do not pass through the ball of radius $d(x_0,\gamma x_0) -(3/2) d(x_0, gx_0)$ around $x_0$. Consequently,
\[ \lvert f_u(\gamma g)-f_u(\gamma)\rvert \leq C a^{-d(x_0,y_0)} \leq C a^{-[d(x_0,\gamma x_0) -3/2 d(x_0,gx_0)]}:= \overline{C} a^{-d(x_0,\gamma x_0)}, \]
where we set $\overline{C}=Ca^{(3/2) d(x_0,gx_0)}$.
 As the map $G \rightarrow X, g \mapsto g x_0$ is a quasi-isometry, we can choose constants $A>0, B\geq 0$ such that
\begin{equation}
\forall g_1,g_2\in G:\ \frac{1}{A} d_G(g_1,g_2) -B \leq d_X(g_1 x_0, g_2 x_0) \leq A d_G(g_1,g_2)+B.
\label{eq:rescaling}
\end{equation}
As $d(x_0,\gamma x_0)\geq \frac{1}{A} \lvert \gamma \rvert-B$, we obtain that
\[ \lvert f_u(\gamma g)-f_u(\gamma)\rvert \leq \widetilde{C} a^{-\frac{\lvert \gamma \rvert}{A}}, \]
where $\widetilde{C}=\overline{C}a^B$
We thus have
\begin{equation}
\|c(g)\|_p^p \leq \widetilde{C} \mu(K) \sum_{i=1}^\infty n^{i-1}a^{\frac{-p}{A}(i-1)}= \widetilde{C} \mu(K) \sum_{i=1}^\infty (na^{\frac{-p}{A}})^{i-1}.
\label{eq:converges}
\end{equation}
Taking $p$ such that $na^{\frac{-p}{A}}<1$, we get that $\|c(g)\|_p<\infty$, hence that $c(g)\in L_p(G,\mu)$ for every $g\in G$.
\end{proof}
Next, we show that $c$ is continuous.
\begin{lemma}
Assume that $p$ is such that $c(g)\in L_p(G,\mu)$ for every $g\in G$. Then, the map $c:G\rightarrow L_p(G,\mu)$ is continuous.
\label{lm:continuouscocycle}
\end{lemma}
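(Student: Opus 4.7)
The plan is to reduce continuity of $c$ to continuity at the identity of $G$, and then apply the dominated convergence theorem using the pointwise bound from Lemma \ref{lm:twerkt} as a uniform integrable envelope on a compact neighborhood of $e$.

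The first step is the reduction. Since $c$ is a $1$-cocycle for the isometric linear action $\rho$, one has
\[ c(g)-c(g_0)=\rho(g_0)c(g_0^{-1}g), \]
so that $\|c(g)-c(g_0)\|_p=\|c(g_0^{-1}g)\|_p$. This reduces the whole statement to showing $\|c(g)\|_p\to 0$ as $g\to e$.

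To apply dominated convergence I would fix a compact neighborhood $V$ of $e$ and set $M:=\sup_{g\in V} d(x_0,gx_0)<\infty$. The pointwise estimate obtained in the proof of Lemma \ref{lm:twerkt} upgrades, uniformly in $g\in V$, to
\[ |f_u(\gamma g)-f_u(\gamma)|^p\leq (Ca^{3M/2+B})^p\, a^{-p|\gamma|/A}=:h(\gamma), \]
and the same geometric-series calculation (which converged precisely under the choice $na^{-p/A}<1$ used in Lemma \ref{lm:twerkt}) shows $h\in L^1(G,\mu)$. It remains to verify $f_u(\gamma g)\to f_u(\gamma)$ for almost every $\gamma$ as $g\to e$. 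In the totally disconnected case, shrinking $V$ inside the open compact stabilizer of $x_0$ gives $\gamma g x_0=\gamma x_0$ for every $\gamma$ and every $g\in V$, so $c(g)\equiv 0$ on $V$. In the rank-one symmetric case, the stabilizer $S$ of $x_0$ is a maximal compact subgroup, hence a Haar-null set; for $\gamma\notin S$ one has $\gamma x_0\neq x_0$, the endpoint of the unique geodesic ray from $x_0$ through $\gamma x_0$ depends continuously on $\gamma x_0$ in the cone topology on $X\cup\partial X$, and the continuity of $u$ on $\partial X$ then yields $f_u(\gamma g)\to f_u(\gamma)$. Dominated convergence finishes the argument.

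The main obstacle is the almost-everywhere pointwise convergence step in the rank-one case: $f_u$ fails to be continuous globally, and one has to rely on the fact that its discontinuity locus coincides with the measure-zero stabilizer $S$. Everything else is either formal (the cocycle reduction) or a direct recycling of the estimates already established in the proof of Lemma \ref{lm:twerkt}.
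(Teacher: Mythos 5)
Your proof is correct and follows the same skeleton as the paper's: reduce to continuity at the identity via the cocycle identity, dispose of the totally disconnected case by shrinking $V$ into the open stabilizer of $x_0$, and in the rank-one case exploit that the discontinuity locus of $f_u$ is the measure-zero stabilizer $S$ together with the integrability of the exponential tail from Lemma \ref{lm:twerkt}. The only real difference is the packaging of the last step: the paper carries out an explicit three-way $\epsilon$-splitting (the tail of the series in Equation (\ref{eq:converges}) uniformly over $v\in V_1$, a set of small measure around $\mathrm{stab}(x_0)$ controlled by the bound $\lvert f_u\rvert\leq \mathrm{diam}(\partial X)$, and uniform continuity of $f_u$ on the remaining compact set), whereas you invoke dominated convergence with the uniform envelope $h(\gamma)=(Ca^{3M/2+B})^p a^{-p\lvert\gamma\rvert/A}$. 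Your route is cleaner and avoids the uniform-continuity bookkeeping; it does tacitly use that sequential continuity at $e$ suffices, which is harmless here since in the rank-one case $G$ is a Lie group (and in general the paper has already reduced modulo a compact normal subgroup to a second countable group). Both arguments rest on the same two facts and are equally rigorous.
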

\begin{proof}
As $\|c(g)-c(h)\|=\|c(h^{-1}g)-c(1)\|$, it suffices to show that $c$ is continuous in $1\in G$. So, fixing $\epsilon$, we need to find a neighbourhood $1\in V\subset G$ such that $\|c(v)\|\leq \epsilon$ for every $v\in V$. In the case where $G$ is totally disconnected, this is trivial: just take $V$ inside the stabilizer of $x_0$. Let us thus focus on the rank 1 case.
By Equation (\ref{eq:converges}) in the previous lemma, and the fact that this infinite sum must converge, we can find an open $1\in V_1\subset G$ and $I$ large enough such that 
\begin{enumerate}
\item $\widetilde{C} \mu(K) \sum_{i=I}^\infty (na^{\frac{-p}{A}})^{i-1}\leq \epsilon/2$ for every $v\in V_1$ (note that $\widetilde{C}$ depends on $v\in V_1$),
\item $K^{I-1} \supset \mbox{Stab}(x_0)V_1$.
\end{enumerate}
The problem is now reduced to choosing $V\subset V_1$ small enough such that one can bound the norm of $c(v), v\in V$ restricted to $K^{I-1}$. Take first $V\subset V_1$ with $V=V^{-1}$ such that $\mu(\mbox{stab}(x_0)V)<\frac{\epsilon}{4\mbox{diam}(\partial X)}$. One can then bound the norm of $c(v)$ restricted to ${K^{I-1}}$ by $\epsilon/4$ plus the norm of $c(v)$ restricted to $K^{I-1}\backslash \mbox{stab}(x_0)V$. By decreasing $V$ further if necessary, we can assure that the distance between $\gamma v$ and $\gamma$ becomes arbitrarily small. Moreover, for $\gamma \in K^{I-1}\backslash \mbox{stab}(x_0)V$, nor $\gamma$, nor $\gamma v$ lies in stab$(x_0)$, so by continuity of $f_u$ we have $\lvert f_u(\gamma v)- f_u(\gamma) \rvert$ arbitrarily small. As continuity on a compact set implies uniform continuity, this leads to the conclusion. 
%
%
\end{proof}
Finally, we show that $c$ is also proper. The proof that we give below mainly follows from the proof in the discrete case by replacing sums by integrals, see Section $2.3$ in \cite{Bourdonmain}. We decide to give it for completeness.
\begin{lemma}
The $1$-cocycle $c$ is proper.
\label{lm:properness}
\end{lemma}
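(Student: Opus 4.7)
The goal is to prove the quantitative bound $\|c(g)\|_p \succeq |g|^{1/p}$ of Theorem~\ref{thm:easierresult}; properness follows immediately. The strategy is to produce $m \asymp |g|$ ``sampling points'' $\gamma_i^{-1} \in G$ at which the integrand $|c(g)(\cdot)|$ is bounded below, at the (essential) cost of replacing $c$ by a finite direct sum of cocycles of the same form.

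\emph{Step 1 (tracking a geodesic).} Fix $g \in G$ and let $\Gamma \subset X$ be the geodesic from $x_0$ to $gx_0$, of length $L \asymp |g|$. By cocompactness of the $G$-action on $X$, there are $m \asymp |g|$ elements $\gamma_1,\ldots,\gamma_m \in G$ with $\gamma_i x_0$ at bounded distance from the point on $\Gamma$ at arc length $i \rho_0$ for a fixed $\rho_0 > 0$.

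\emph{Step 2 (visual separation).} For each $i$, the isometricity of the $G$-action on $X$ gives
\[ (\gamma_i^{-1}x_0 \mid \gamma_i^{-1}gx_0)_{x_0} = \tfrac{1}{2}\bigl[d(x_0,\gamma_i x_0)+d(\gamma_i x_0,gx_0)-d(x_0,gx_0)\bigr] = O(1), \]
the three distances being $\approx i\rho_0$, $L-i\rho_0$ and $L$. By Definition~\ref{defn:visualmetric}, $d_a(\xi_{\gamma_i^{-1}},\xi_{\gamma_i^{-1}g}) \geq c_1 > 0$, independently of $g$ and $i$.

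\emph{Step 3 (from visual separation to cocycle values).} A single choice $u=d_a(\xi_0,\cdot)$ need not convert this separation into a pointwise lower bound on $|c(g)(\gamma_i^{-1})|$, since both boundary points can be equidistant from $\xi_0$ (e.g.\ in $F_2$ with $\xi_0=a^\omega$ and $g=b^n$ one computes $\|c(g)\|_p$ bounded in $n$). This motivates the finite direct sum appearing in Theorem~\ref{thm:easierresult}: by compactness of $\partial X$, pick a $(c_1/4)$-net $\eta_1,\ldots,\eta_N \in \partial X$ and put $u_j:=d_a(\eta_j,\cdot)$; for any $\xi,\xi'$ with $d_a(\xi,\xi') \geq c_1$, some $\eta_j$ lies within $c_1/4$ of one of them, yielding $|u_j(\xi)-u_j(\xi')| \geq c_1/2$. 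Running the above construction with each $u_j$ produces cocycles $c_j:G \to L_p(G,\mu)$; their direct sum $c=(c_1,\ldots,c_N)$ satisfies $\sum_j |c_j(g)(\gamma_i^{-1})|^p \geq (c_1/2)^p$ for every $i$.

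\emph{Step 4 (integration).} By the continuity of each $c_j$ (Lemma~\ref{lm:continuouscocycle}), the pointwise bound extends to a compact neighbourhood $V_i \ni \gamma_i^{-1}$ of uniformly positive Haar measure, with the $V_i$'s pairwise disjoint (the $\gamma_i^{-1}$'s being at word-distance $\succeq 1$ by the quasi-isometry between $G$ and $X$). Hence
\[ \|c(g)\|_p^p = \sum_{j=1}^{N}\|c_j(g)\|_p^p \geq \sum_{i=1}^{m}\int_{V_i}\sum_j |c_j(g)(\gamma)|^p\,d\mu(\gamma) \succeq m \succeq |g|. \]

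\emph{Main obstacle.} The crux is to reconcile the geometric boundary separation (Gromov-product cancellation) with an analytic lower bound on the $L^p$-integrand. A single Lipschitz function on $\partial X$ does not suffice in general, so one must pass to a finite direct sum, which by compactness of $\partial X$ involves only finitely many copies of $L_p(G)$. A subsidiary concern is extending the pointwise bounds at $\gamma_i^{-1}$ to positive-measure neighbourhoods, which uses the continuity of the cocycle already established.
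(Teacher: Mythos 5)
Your overall strategy is the paper's: sample $\asymp|g|$ group elements whose orbit points track the geodesic $[x_0,gx_0]$, show that after translating back to the basepoint the two boundary points $\xi_\gamma,\xi_{\gamma g}$ are uniformly visually separated, repair the failure of a single Lipschitz $u$ by a finite net of functions $d_a(\eta_j,\cdot)$ (hence the finite direct sum), and integrate over disjoint translates of $K$. Steps 1--3 match the paper's proof almost exactly -- the paper takes $\gamma_i=(k_1\cdots k_{3i})^{-1}$ from a geodesic word for $g$ and gets the separation by noting that the discrete geodesic from $\gamma_i g$ to $\gamma_i$ passes through $1$, rather than by computing the Gromov product, but this is cosmetic -- and your Step 3 diagnosis of why a single $u$ fails, together with the finite-cover construction, is precisely the paper's.

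The step that does not hold up as written is the first sentence of Step 4. Lemma \ref{lm:continuouscocycle} asserts continuity of $g\mapsto c_j(g)$ in the $L_p$-norm; this says nothing about the pointwise values $\gamma\mapsto c_j(g)(\gamma)$ and cannot upgrade a lower bound at the single point $\gamma_i^{-1}$ to a lower bound on a set of uniformly positive measure (elements of $L_p$ are only defined up to null sets, and the continuity is in the wrong variable in any case). The correct repair -- and what the paper actually does, by bounding $\inf_{\gamma\in K\gamma_i}|u(\xi_{\gamma g})-u(\xi_\gamma)|$ directly -- is to run your Steps 1--2 uniformly over the whole translate $K\gamma_i^{-1}$: for $\gamma=k\gamma_i^{-1}$ with $k\in K$, the discrete geodesic from $\gamma g$ to $\gamma$ coming from a geodesic word for $g$ still passes through $k\in K$, so $[\gamma gx_0,\gamma x_0]$ meets a ball $B(x_0,R)$ with $R$ depending only on $K$, $\delta$ and the quasi-isometry constants, and the separation estimate holds for every such $\gamma$; one then uses $\mu(K\gamma_i^{-1})=\mu(K)$ by right-invariance. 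Two smaller points: your Step 2 claim ``independently of $i$'' fails for the boundedly many indices with $i\rho_0$ or $L-i\rho_0$ below a threshold (both orbit points must be far from $x_0$ before the finite Gromov product controls the ideal one), which is why the paper restricts to $k_0\le i\le \lfloor m/3\rfloor-k_0$; and disjointness of the $V_i$ requires taking $\rho_0$ large relative to the quasi-isometry constants, not merely word-distance $\succeq 1$.
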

\begin{proof}
Choose $g\in G$ and denote the length of $g$ by $m$. Find elements $k_1,k_2,\ldots k_m\in K$ whose product equals $g$. Define $\gamma_i=(k_1k_2\ldots k_{3i})^{-1}$ for $i=1,2,\ldots ,\lfloor m/3 \rfloor$, where $\lfloor m/3 \rfloor$ is the largest integer smaller or equal to $m/3$. Write $A=\cup K\gamma_i=\sqcup K\gamma_i$. We obtain
\begin{eqnarray*}
 \|c(g)\|_p^p 
 & \geq & \int_A \lvert f_u(\gamma g)-f_u(\gamma) \rvert^p d\mu \\
 & \geq &
\sum_{i=1}^{\lfloor m/3 \rfloor} \inf_{\gamma \in K\gamma_i} \lvert u(\xi_{\gamma g})- u(\xi_\gamma) \rvert^p \mu(K)
 \end{eqnarray*}
For each $k\in K$, take the shortest path $k\gamma_i g, k\gamma_i g k_m^{-1}, k\gamma_i g k_m^{-1} k_{m-1}^{-1}, \ldots, k\gamma_i$. It is clear that this path passes through $k$ and hence through $K$. As $G \rightarrow X, \gamma \mapsto \gamma x_0$ is a quasi-isometry and as $X$ is hyperbolic, we conclude that the geodesics $[k\gamma_i g x_0,k\gamma_i x_0]$ pass through the ball of radius $R$ and 
centre $x_0$ for some $R>0$ independent of $g$. Using a hyperbolicity argument, we see that there exists $k_0\in \N$ and 
$C>0$ such that for $g$ large enough and for $i\in \{k_0,k_0+1,\ldots \lfloor m/3 \rfloor -k_0 \}$, the points $\xi_{k\gamma_i g}=\xi_{\gamma g}$ and $\xi_{k\gamma_i}=\xi{\gamma}$ are $C$-diametrically opposite, i.e. that $(\xi_{\gamma g},\xi_{\gamma})$ passes through the ball $B(x_0,C)$.

%
If we can find a constant $C'$ and choose $u$ such that $C$-diametrically opposite points $\xi,\eta \in \partial G$ satisfy $\lvert u(\xi)-u(\eta) \rvert \geq C'/(\mu(K))^{1/p}$, then we can conclude that when $\lvert g \rvert$ is sufficiently large:
\[ \|c(g) \|_p \geq ((\lvert g \rvert-2)/3 -2k_0)^{1/p}C', \]
i.e. that $\alpha^*_p(G)\geq \frac{1}{p}$.

The map $u:\partial G \rightarrow \R, \xi \mapsto d_a(\xi,\xi_0)$ that we have chosen may not satisfy the above condition. We may however always find a finite collection of Lipschitz functions $u_1,u_2,\ldots,u_s$ on $\partial G$ and a constant $C'>0$ such that for any $C$-diametrically opposite points $\chi,\eta \in \partial G$ there is some $u_i \ (i=1,2,\ldots ,s)$ such that $\lvert u_i(\chi)-u_i(\eta) \rvert \geq C'/(\mu(K))^{1/p}$. Indeed,  as we are using a visual metric on $\partial G$, we obtain a constant $\widetilde{C}$ such that $d_a(\xi,\eta)\geq \widetilde{C}$ for any $C$-diametrically opposite points $\xi,\eta \in \partial G$. Now cover $\partial G$ with open balls of radius $\overline{C}:=\widetilde{C}/3$ and derive a finite subcover $B(\xi_1,\overline{C}),B(\xi_2,\overline{C}), \ldots, B(\xi_s,\overline{C})$. For each $i$, define a Lipschitz map $u_i:\partial G \rightarrow \R, \xi \mapsto d_a(\xi,\xi_i)$. Given $C$-diametrically opposite points $\chi,\eta \in \partial G$, choose $\xi_j$ such that $d_a(\chi,\xi_j)\leq \overline{C}$.
As $d_a(\chi,\eta)\geq 3\overline{C}$, we obtain by the triangle inequality that $\lvert u_i(\chi)-u_i(\eta) \rvert \geq \overline{C}$.

 One can now conclude by reasoning as above where we consider the diagonal action of $G$ on $L_p(\sqcup_{i=1}^s G)$ instead of $L_p(G)$ and the cocycle $c=(c_1,c_2,\ldots ,c_s)$ where $c_i$ is associated to $u_i$ as before.
%
%
\end{proof}
\end{proof}
\begin{remark}
Assume that $d_a$ is a visual metric on $\partial X$, where $X$ is a proper hyperbolic geodesic metric space that admits a proper, cocompact isometric $G$-action. Take $A>0,B\geq 0$ such that $G\rightarrow X, g \mapsto g x_0$ is an $(A,B)$-quasi-isometry. Take $K$ a compact generating subset of $G$ and let $n\in \N$ be such that $K^2$ is contained in a union of $n$ right translates of $K$. Then the proof of Lemma \ref{lm:twerkt} shows that $p$ is {\em sufficiently large} whenever $p>A \frac{\ln(n)}{\ln(a)}$.
\label{remark:valueofN}
\end{remark}
\section{Proper affine isometric actions of locally compact hyperbolic groups on $L_p$-spaces: the improved result \label{sc:harder}}
In this section we improve the bound given in Remark \ref{remark:valueofN}. Our new bound will be the {\em Ahlfors regular conformal dimension} of the hyperbolic boundary of $G$, as introduced in Bourdon-Pajot \cite{BourdonPajot}.

\subsection{Preliminaries}
\subsubsection{Ahlfors regular conformal dimension}
Apart from visual metrics on the boundary of a hyperbolic group, one can also consider the class of Ahlfors regular metrics. The metric $d$ on a compact metric space $(Z,d)$ is called {\bf Ahlfors regular} if its Hausdorff dimension $Q$ and its $Q$-Hausdorff measure $\nu$ satisfy
\[ \frac{1}{C} r^Q \leq \nu(B(r)) \leq Cr^Q, \]
for every ball $B(r)\subset (Z,d)$ with $r\leq \mbox{diam}(Z,d)$ and where $C>0$ is a constant that is independent of $r$.
Any compact metric space $(Z,d)$ with $d$ Ahlfors regular satisfies the following properties. 
\begin{defn}
A compact metric space $(Z,d)$ is {\bf uniformly perfect} if there exists a constant $C>1$ such that for any ball $B(x,r)\subset Z$ with $0<r\leq \mbox{diam}(Z)$, we have $B(x,r)\setminus B(x,r/C)\neq \emptyset$. A compact metric space $(Z,d)$ is called {\bf doubling} if there exists a constant $C\geq 1$ such that for each $r>0$, each ball of radius $r$ can be covered by at most $C$ balls of radius $r/2$. Related to this, a measure $\nu$ on $(Z,d)$ is called {\bf doubling} if open balls all have finite measure and if there exists a constant $C$ such that for each $r>0$ and $z\in Z$, we have $\nu(B(z,2r))\leq C \nu(B(z,r))$. It is easy to check that the $Q$-Hausdorff measure $\nu$ of an Ahlfors regular (hence doubling) metric space of Hausdorff dimension $Q$, is doubling.
\end{defn}
Corollary 14.15 in \cite{Heinonen} shows that conversely, if $(Z,d)$ satisfies these properties (i.e. is uniformly perfect and has a doubling measure), then it can be equipped with an Ahlfors regular metric {\em quasi-symmetric} to $d$.
\begin{defn}
Two metrics $d,d'$ on a compact metric space $(Z,d)$ are {\bf quasi-symmetric} if they induce the same topology and if there exists a homeomorphism $\eta:(0,\infty) \rightarrow (0,\infty)$ such that for any $x,y,z\in Z$ and $t\in [0,\infty)$, we have
\[ \frac{d(x,z)}{d(y,z)}\leq t \Rightarrow  \frac{d'(x,z)}{d'(y,z)}\leq \eta(t) .\]
We note that the previous implication also implies
\[ \frac{d'(x,z)}{d'(y,z)}\leq t \Rightarrow \frac{d(x,z)}{d(y,z)}\leq \psi(t):=1/\eta^{-1}(t^{-1}) .\]
\end{defn}
As an example of an Ahlfors regular metric, we can take any visual metric on the boundary of a discrete hyperbolic group. More generally, given a locally compact hyperbolic group $G$, it acts properly cocompactly isometrically on a proper hyperbolic geodesic metric space $X$. The boundary of $G$ coincides with that of $X$ by definition. One can now show (by verifying the conditions in \cite{BourdonKleiner}, bottom of page 2) that the boundary of a locally compact hyperbolic group, equipped with a visual metric, is uniformly perfect and doubling. As the boundary is also a complete metric space, it carries a doubling measure (Theorem 13.3 in \cite{Heinonen}). In particular, any visual metric is quasi-symmetric to an Ahlfors regular metric.

It is often convenient to work with different characterizations of quasi-symmetric metrics. We state the following.
\begin{prop}
Let $(Z,d)$ be a compact, uniformly perfect metric space and assume that $d'$ is a metric on $Z$ that induces the same topology as $d$. Consider the following three conditions.
\begin{enumerate}
\item $d$ and $d'$ are quasi-symmetric.
\item $d$ is H\"older equivalent to $d'$, i.e. there exists $\alpha,\beta,C>0$ such that for all $x,y\in Z$, we have
\[ \frac{1}{C} d'(x,y)^\beta \leq d(x,y) \leq C d'(x,y)^\alpha .\]
\item $d'$ is annulus semi-quasiconformal with respect to $d$. This means that if we define $\overline{d}_{d'}(x,r)=\sup_{y \in B_{d'}(x,r)} d(x,y)$, then for any $\epsilon \in (0,1)$, there exists $\delta \in (0,1)$ such that a $d'$-annulus $B_{d'}(x,r)\setminus B_{d'}(x,\epsilon r)$ is contained in a $d$-annulus $B_d(x,(1+a)\overline{d}_{d'}(x,r))\setminus B_d(x,\delta \overline{d}_{d'}(x,r))$ for any $a>0$.
\end{enumerate}
Then, $1\rightarrow 2$ and $1\leftrightarrow 3$.
\label{prop:equivdef}
\end{prop}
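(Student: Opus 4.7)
The plan is to handle the three implications separately. The implication $1 \Rightarrow 3$ is a direct computation from the definition of the quasi-symmetric gauge $\eta$; the implication $1 \Rightarrow 2$ is the classical Hölder estimate for quasi-symmetric maps on uniformly perfect spaces; and $3 \Rightarrow 1$ requires the most delicate bookkeeping.

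For $1 \Rightarrow 3$, fix $x \in Z$ and $r > 0$. The outer containment $B_{d'}(x,r) \subset B_d(x,(1+a)\overline{d}_{d'}(x,r))$ is immediate from the definition of $\overline{d}_{d'}$ for every $a > 0$. For the inner containment, suppose $y \in B_{d'}(x,r) \setminus B_{d'}(x,\epsilon r)$; then for any $z \in B_{d'}(x,r)$ one has $d'(x,z)/d'(x,y) \leq r/(\epsilon r) = 1/\epsilon$, so quasi-symmetry gives $d(x,z) \leq \eta(1/\epsilon)\, d(x,y)$. Taking the supremum over $z \in B_{d'}(x,r)$ yields $\overline{d}_{d'}(x,r) \leq \eta(1/\epsilon)\, d(x,y)$, and setting $\delta := 1/\eta(1/\epsilon) \in (0,1)$ gives the required inner bound.

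For $1 \Rightarrow 2$, the approach is to combine the quasi-symmetric inequality with uniform perfectness of $(Z,d)$ via iteration at geometric scales. Given $x, y \in Z$ at small $d$-distance, uniform perfectness provides, at each geometric scale $\lambda^k \operatorname{diam}(Z,d)$, an auxiliary point $w_k$ with $d(x,w_k)$ comparable to $\lambda^k \operatorname{diam}(Z,d)$; comparing $d'$-distances along consecutive scales and applying the gauge $\eta$ at each step produces an estimate $d'(x,y) \leq C\, d(x,y)^\alpha$ for some $\alpha > 0$ depending only on $\eta$ and the uniform perfectness constant. The reverse inequality follows by symmetry, using that $(Z,d')$ is automatically uniformly perfect and that the identity from $(Z,d')$ to $(Z,d)$ is also quasi-symmetric, with gauge $\psi(t) = 1/\eta^{-1}(t^{-1})$. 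This is the classical argument from Proposition 10.8 of Heinonen's \emph{Lectures on Analysis on Metric Spaces}.

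For $3 \Rightarrow 1$, the plan is first to extract from condition 3 a weakly quasi-symmetric comparison between $d$ and $d'$, and then to upgrade this to full quasi-symmetry by exploiting uniform perfectness. Given $x,y,z \in Z$ with $d(x,z)/d(y,z) \leq t$, one must bound $d'(x,z)/d'(y,z)$ purely in terms of $t$. Setting $r = d'(z,x)$, split on whether $d'(z,y)$ is comparable to $r$ or much smaller: in the first case, a single application of condition 3 with a suitable $\epsilon$ forces both $d(z,x)$ and $d(z,y)$ to lie in a common $d$-annulus around $z$, bounding their ratio; in the second case, one iterates condition 3 over a geometric sequence of scales between $d'(z,y)$ and $d'(z,x)$, at each step transferring $d'$-control to $d$-control. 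The main obstacle is that condition 3 only controls $d$-distances through the auxiliary quantity $\overline{d}_{d'}(z,\cdot)$, which is not a priori comparable to an actual $d$-distance. Here the uniform perfectness of $(Z,d)$ is essential: it guarantees that for each scale $r$, one can find a point $w$ in a prescribed $d'$-annulus around $z$ whose $d$-distance from $z$ is comparable to $\overline{d}_{d'}(z,r)$. Chaining the annulus inclusions provided by condition 3 through such reference points produces the desired gauge $\eta$, expressed in terms of the function $\delta(\epsilon)$ coming from condition 3 and the uniform perfectness constant of $(Z,d)$.
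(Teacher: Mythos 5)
Your proposal and the paper's proof are not really comparable, because the paper does not prove this proposition at all: it disposes of $1\Rightarrow 2$ by citing Remark 1.1.b of Bourdon's article and of $1\leftrightarrow 3$ by citing the beginning of Part 2 of Kigami's memoir. Measured against that, your treatment is strictly more informative. Your argument for $1\Rightarrow 3$ is complete and correct; the only slip is notational, since with the paper's convention the ratio $d'(x,z)/d'(x,y)\leq 1/\epsilon$ should be fed into the reverse gauge $\psi(t)=1/\eta^{-1}(t^{-1})$ rather than into $\eta$ itself, and one should take $\delta=\min(1/2,\,1/\psi(1/\epsilon))$ to guarantee $\delta\in(0,1)$. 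Your reduction of $1\Rightarrow 2$ to the classical H\"older estimate for quasi-symmetric maps of uniformly perfect spaces (Tukia--V\"ais\"al\"a; this is Chapter 11 of Heinonen rather than Proposition 10.8) is the standard route and is essentially the content of Bourdon's cited remark, so that part is fine as a proof by reference with a correct sketch attached.

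The one place where your proposal falls short is $3\Rightarrow 1$, which is also the only implication requiring real work. What you give there is a strategy, not an argument. The assertion that uniform perfectness produces, at every scale $r$, a reference point $w$ with $d(z,w)$ comparable to $\overline{d}_{d'}(z,r)$ is precisely the step that needs justification: uniform perfectness of $(Z,d)$ supplies points in prescribed $d$-annuli, not $d'$-annuli, so some translation between the two families of annuli must be established before the chaining can begin. You also do not explain how the chained estimates assemble into an actual homeomorphism $\eta$ of $(0,\infty)$, in particular why the resulting bound on the $d$-ratio tends to $0$ as $t\to 0$; this requires extracting geometric decay of $\overline{d}_{d'}(z,\epsilon^k r)$ in $k$ from condition 3, which is not automatic since condition 3 only locates annuli relative to $\overline{d}_{d'}$ at a single scale. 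Since the paper itself outsources exactly this implication to Kigami, your outline is in the spirit of the intended proof, but as written it leaves a genuine gap at this point.
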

\begin{proof}
The implication $(1) \rightarrow (2)$ follows from Remark 1.1.b in \cite{Bourdonholder}. A proof of the equivalence $(1)\leftrightarrow (3)$ can be found in \cite{Kigami}, beginning of part 2.
\end{proof}
\begin{defn}
Let $(Z,d)$ be a compact, uniformly perfect, doubling metric space. One defines the Ahlfors regular conformal gauge $\mathcal{J}_{AR}(Z,d)$ of $(Z,d)$ as
\[ \mathcal{J}_{AR}(Z,d)=\{ d'\mid d' \mbox{ is an Ahlfors regular metric on }Z\mbox{ quasi-symmetric to d}\} .\]
Visual metrics on the boundary of a proper hyperbolic metric space are always quasi-symmetric. We may thus abbreviate $\mathcal{J}_{AR}(Z,d)$ by $\mathcal{J}_{AR}(\partial G)$ if $(Z,d)$ is the boundary of a locally compact hyperbolic group $G$, equipped with a visual metric.
\end{defn}
\begin{defn}
Let $(Z,d)$ be a compact, uniformly perfect, doubling metric space. The Ahlfors regular conformal dimension, Confdim$(Z,d)$, of $(Z,d)$ is defined as
\[ \mbox{Confdim}(Z,d)=\inf\{\mbox{Hausdim}(Z,d')\mid d'\in \mathcal{J}_{\mbox{AR}}(Z,d)\},\]
where Hausdim$(Z,d')$ denotes the Hausdorff dimension of $(Z,d')$. Again, if $(Z,d)$ is the hyperbolic boundary of a hyperbolic group $G$, equipped with a visual metric, then we write Confdim$(Z,d)=$ Confdim$(\partial G)$.
\end{defn}
\subsubsection{Shadows}
Given a ball in $X$, say of radius $R$ and centre $x$, we define its shadow by
\[ \Sh(x,R)=\{\xi \in \partial X \mid \exists \mbox{ a geodesic ray } r=(x_0,\xi) \mbox{ such that } r\cap B(x,R)\neq \emptyset \} .\]
We state the following lemma for reference sake, omitting the proof which is not so difficult.
\begin{lemma}
Let $G$ be a locally compact hyperbolic group and $X$ an associated hyperbolic geodesic metric space.
If $\partial X \neq \emptyset$, then there exists $R>0$ such that $\Sh(x,R)\neq \emptyset$ for every $x\in X$.
\label{lm:fixedR}
\end{lemma}
In this paragraph, we elaborate on the relationship between shadows and balls. The underlying idea is that shadows ressemble $d$-balls whenever $d$ is quasi-symmetric to a visual metric.
The following observation will be used later, but the proof is actually an exercise. That is why we decide to omit it.
\begin{lemma}
Let $x_0\in X$ be some base-point, fix some $\xi \in \partial G$ and take $R$ as in Lemma \ref{lm:fixedR}. Assume that $A\subset \partial G$ is an open neighbourhood of $\xi$. Then, any geodesic ray in the class of $\xi$ emanating from $x_0$ contains a point $x$ such that $\Sh(x,2R)\subset A$.
\label{lm:shadowinball}
\end{lemma}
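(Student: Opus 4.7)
The goal is to show that, along any geodesic ray from $x_0$ to $c$, shadows of $2R$-balls centred at points far out on the ray are contained in a prescribed neighbourhood $A$ of $c$. The natural tool is a visual metric $d_a$ on $\partial G$: by Definition \ref{defn:visualmetric}(1) such a metric induces the boundary topology, so there exists $\epsilon>0$ with $B_{d_a}(c,\epsilon)\subset A$. The strategy is therefore to quantify ``$\xi\in\Sh(x,2R)$ forces $\xi$ to be close to $c$ in $d_a$'' and then push $x$ far enough along the ray so that the resulting visual distance drops below $\epsilon$.

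\paragraph{Key step: the shadow estimate.}
Fix a geodesic ray $r\in c$ starting at $x_0$ and let $x$ be a point on $r$ at distance $n:=d(x_0,x)$ from $x_0$. Pick $\xi\in\Sh(x,2R)$ and let $r'$ be a geodesic ray from $x_0$ to $\xi$ meeting $B(x,2R)$. I would form a bi-infinite geodesic $(\xi,c)$ as a limit of geodesic segments $[r'(i),r(i)]$ with $i\to\infty$. Because both $r$ and $r'$ come within distance $2R$ of $x$ at parameter $\approx n$, hyperbolicity of ideal triangles (they are $20\delta$-thin, as used earlier in the proof of Lemma \ref{lm:fixedR}) forces the limiting bi-infinite geodesic $(\xi,c)$ to pass within distance $2R+O(\delta)$ of $x$. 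Hence the closest point $y$ on $(\xi,c)$ to $x_0$ satisfies
\[
d(x_0,y)\;\geq\;n-(2R+20\delta).
\]
Plugging this into Definition \ref{defn:visualmetric}(2) gives
\[
d_a(\xi,c)\;\leq\;C\,a^{-d(x_0,y)}\;\leq\;C'\,a^{-n},
\]
where $C'=C\,a^{2R+20\delta}$ depends only on $R$ and $\delta$, not on $x$ or $\xi$.

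\paragraph{Conclusion.}
Given the open neighbourhood $A$ of $c$, choose $\epsilon>0$ with $B_{d_a}(c,\epsilon)\subset A$, then take $n$ so large that $C'a^{-n}<\epsilon$, and let $x=r(n)$. By the shadow estimate, every $\xi\in\Sh(x,2R)$ satisfies $d_a(\xi,c)<\epsilon$, hence $\xi\in A$. Thus $\Sh(x,2R)\subset A$, as required.

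\paragraph{Main obstacle.}
The one point that needs genuine care is the hyperbolicity estimate producing $d(x_0,y)\geq n-O(R+\delta)$. One must verify that any bi-infinite geodesic in the class $(\xi,c)$ (with both $r$ and $r'$ coming $2R$-close to $x$ at parameter roughly $n$ from $x_0$) is forced by $\delta$-thin triangles to come close to $x$ as well; the standard argument is to consider for large $i$ the geodesic triangle on $x_0,r(i),r'(i)$, use $\delta$-thinness together with the fact that the $x_0$-sides of this triangle both pass near $x$, and then take the limit $i\to\infty$. Everything else is bookkeeping with constants that are absorbed into $C'$.
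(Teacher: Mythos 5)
Your proof is correct and follows essentially the same route as the paper: fix a visual metric $d_a$, observe that both rays $[x_0,\xi)$ and $[x_0,c)$ meet $B(x,2R)$, deduce that the limiting bi-infinite geodesic $(\xi,c)$ stays outside the ball of radius $d(x_0,x)-O(R+\delta)$ around $x_0$, and conclude $d_a(\xi,c)\leq C'a^{-d(x_0,x)}$, which is $<\epsilon$ for $x$ far enough out. One small logical nit: the inference ``$(\xi,c)$ passes within $2R+O(\delta)$ of $x$, hence its closest point to $x_0$ is at distance $\geq d(x_0,x)-(2R+20\delta)$'' does not follow as stated (proximity to $x$ alone does not prevent the geodesic from also dipping toward $x_0$); the correct justification is the one you sketch in your final paragraph, namely the bound $d(x_0,[r(i),r'(i)])\geq (r(i)\mid r'(i))_{x_0}\geq d(x_0,x)-2R$ followed by passage to the limit --- the paper is equally terse on exactly this point.
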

We also require the following lemma.
\begin{lemma}
Choose $x_0\in X$ and $R>\max(1,20\delta)$ such that for every $g\in G: S(gx_0,R)\neq \emptyset$.
Let $d$ be a metric in $\mathcal{J}_{\mbox{AR}}(\partial G)$. There exists a constant $D>0$ such that for all $g \in G$ there exists $r>0$ and $\xi\in \partial G$ with
\[ B_d(\xi,r) \subset \Sh(g x_0,2R) \subset B_d(\xi,Dr).\]
\label{lm:shadowsballs}
\end{lemma}
\begin{proof}
It is an exercise in hyperbolic geometry to verify the above result for a visual metric $d_a$. 
If $d$ is {\em any} metric in $\mathcal{J}_{\mbox{AR}}(\partial G)$, then it must be quasi-symmetric to $d_a$. By Proposition \ref{prop:equivdef}, $d$ is then annulus semi-quasiconformal to $d_a$, hence the result follows.
\end{proof}
\subsection{A better bound}
In this paragraph, we prove the following result.
\begin{theorem}	
Let $G$ be a non-amenable locally compact non-elementary hyperbolic group and denote the Ahlfors regular conformal dimension of $\partial G$ by $Q$. Then for each $p> Q$, there is a metrically proper affine isometric action of $G$ on a finite $l_p$-direct sum of copies of $L_p(G)$. The linear part of the action is the finite direct sum of the natural translation actions. One can choose the corresponding $1$-cocycle to have compression function $\succeq t^{1/p}$. In particular, for every $p>Q$, we have $\alpha_p^*(G)\geq 1/p$.
 \label{theorem:conformal} \label{theorem:maindifficult}
\end{theorem}
\begin{remark}
In the proof below, we will define the same $1$-cocycle $c$ as in the proof of Theorem \ref{theorem:easierresult}, hence, we will get the properness and continuity of $c$ for granted by the Lemmas \ref{lm:continuouscocycle} and \ref{lm:properness}. The main difficulty will be to show that $c(g)\in L_p(G,\mu)$ for every $g\in G$ and $p>Q$. Our proof uses ideas from Proposition $2.3$ in \cite{Bourdonmain}.
\end{remark}
\begin{proof}
This proof uses similar notations as in Section \ref{sc:easierproof}. So, let $(X,d_X)$ be a proper $\delta$-hyperbolic geodesic metric space that admits a proper cocompact isometric $G$-action. Let $K$ be a compact generating neighbourhood of $1\in G$.
Choose $x_0\in X, \xi_0\in \partial G$ and $R>\max(\mbox{diam}(Kx_0),20\delta)$.
Take $A>0,B\geq 0$ such that $G \rightarrow X, \gamma \mapsto \gamma x_0$ is an $(A,B)$-quasi-isometry. An open ball in $X$ with centre $x$ and radius $r$ will be denoted by $B_X(x,r)$. Let $d\in \mathcal{J}_{AR}(\partial G)$ and denote the Hausdorff measure and Hausdorff dimension of $(\partial G,d)=(\partial X,d)$ by $\nu$ and $\overline{Q}$ respectively.
 
As before, we define formally the $1$-cocycle $c:=g \in G \mapsto \rho(g)(f_u)-f_u$ with respect to the standard right translation action $\rho$ on $L_p(G,\mu)$. 
Here, as before, $u$ is a Lipschitz-map on $\partial G$ and $f_u:G \rightarrow \R$ maps $g\in G$ to $u(\xi_{g})$ where $\xi_{g}$ is any element in the boundary of $X$ corresponding to a geodesic ray starting in $x_0$ through $gx_0$.
We want to show that $c(g)\in L_p(G,\mu)$ for all $g\in G$ and $p>\overline{Q}$.

Choose $g\in G$ and write $g=k_1k_2 \ldots k_m$ where $k_i\in K$ and $m=\lvert g \rvert$ is the length of $G$. As one can find a constant $C(m)$ such that
\[ \|c(g)\| \leq C(m) \sum_{i=1}^m \| c(k_i) \|_p^p,\]
it suffices to consider the case $g=k\in K$.

 For each $\gamma \in G$, let us denote the minimal radius of a $d$-ball in $\partial G$ containing $\Sh(\gamma x_0,2R)$ by $r(\gamma)$. Note that $\Sh(\gamma x_0,2R)\cap \Sh(\gamma k x_0,2R)\neq \emptyset$, so $d(\xi_{\gamma k},\xi_\gamma)\leq 2(r(\gamma k)+r(\gamma))$. Using the fact that $u$ is Lipschitz and that $\mu$ is right-invariant, we obtain a constant $C_1>1$ such that
\[ \|c(k)\|_p^p \leq C_1 \int_G r(\gamma)^p d\mu. \]

By Lemma \ref{lm:shadowsballs}, there is $i\in \N$ such that for each $\gamma \in G$, there is $\xi \in \partial G$, such that $B(\xi,r(\gamma)/2^i)\subset \Sh(\gamma x_0,2R) \subset B(\xi,r(\gamma))$. So, by Ahlfors regularity (and the fact that Ahlfors regular measures are doubling), we see that for some $\xi\in \partial G$, we have $r(\gamma)^{\overline{Q}} \leq  C_2 C_3^i \nu(\Sh(\gamma x_0,2R))$,
where $C_2$ and $C_3$ are the constants appearing in the definition of Ahlfors regular metric and doubling measure respectively. Writing $C_2:=C_1(C_2C_3^i)^{p/\overline{Q}}$, we thus obtain
\begin{equation}
 \|c(k)\|_p^p \leq  C_2 \int_G (\nu(\Sh(\gamma x_0,2R)))^{p/\overline{Q}} d\mu
\label{eq:zoalsdiscreet}
\end{equation}

{\em Let us first bound the above integral by a suitably chosen infinite sum which we can later bound from above.}
To this end, 
write $V=K^{A(8R+2\delta+1+B)}, \epsilon_i=\frac{1}{2^i}\ (i\in \N)$ and choose $\gamma_0\in G$ such that $\nu(\Sh(\gamma_0x_0,2R))\geq \sup_{\gamma \in G} \nu(\Sh(\gamma x_0,2R))-\epsilon_0$.
As a next step, choose $\gamma_1 \in G\setminus(\gamma_0V)$ such that 
\[ \nu(\Sh(\gamma_1x_0,2R))\geq \sup_{\gamma \in G \setminus \gamma_0V} \nu(\Sh(\gamma x_0,2R))-\epsilon_1\]  and continue in this manner. So, choose $\gamma_2 \in G\setminus(\gamma_0V\cup \gamma_1V)$ such that 
\[\nu(\Sh(\gamma_2x_0,2R))\geq \sup_{\gamma \in G \setminus (\gamma_0V \cup \gamma_1V)} \nu(\Sh(\gamma x_0,2R))-\epsilon_2,...\] We claim that we so obtain a countable cover $\gamma_i V$ of $G$. To show this, assume by contradiction that $\cup_{i\in \N} \gamma_iV \neq G$. Using Lemma \ref{lm:shadowsballs}, we see that shadows $\Sh(y,2R)$ with $y\in X$ contain open balls of positive radius and hence have strictly positive measure by Ahlfors regularity. Given $\widetilde{\gamma}\in G\backslash \cup_{i\in \N} \gamma_iV$, we thus find $\widetilde{\epsilon}>0$ such that $\nu(\Sh(\widetilde{\gamma}x_0,2R))=\widetilde{\epsilon}$. By construction of the sequence $(\gamma_i)_i$, this thus means that $\nu(\Sh(\gamma_ix_0,2R))>\widetilde{\epsilon}-\epsilon_i>\widetilde{\epsilon}/2$ for $i$ sufficiently large. 
From the fact that $d$ is  H\"older equivalent with a visual metric, we can find a compact neighbourhood $L\subset G$ of $1$ such that $\nu(\Sh(\gamma x_0,2R))<\widetilde{\epsilon}/2$ whenever $\gamma \notin L$. In particular, infinitely many of the $\gamma_i$ must lie in $L$. By construction, these $\gamma_i$ have no limit point, contradicting compactness of $L$. We thus conclude that the $(\gamma_i V)_{i\in \N}$ form a countable cover of $G$ by left translates of $V$ such that the word length distance $d_V(\gamma_i,\gamma_j)\geq 2$ for all $i\neq j \in \N$.

Denote $\mathcal{G}=\{\gamma_i \mid i\in \N\}$ and for each $n\in \N$, define
\[ \mathcal{A}(n)=\{\gamma \in G \mid \gamma x_0 \in B_X(x_0,n)\setminus B_X(x_0,n-1) \}. \]
We can find constants $D_1,C_3>0$ such that
\begin{eqnarray*}
\|c(k)\|_p^p & \leq &
C_2 \sum_{n\in \N} \sum_{\gamma_i \in \mathcal{G}\cap \mathcal{A}(n)} (\nu(\Sh(\gamma_i x_0,2R))+\epsilon_i)^{p/\overline{Q}} \mu(V) \\
&\leq &
 D_1+ C_3 \sum_{n\in \N} \sum_{\gamma_i \in \mathcal{G}\cap \mathcal{A}(n)} \nu(\Sh(\gamma_i x_0,2R))^{\frac{p-\overline{Q}}{\overline{Q}}} \nu(\Sh(\gamma_i x_0,2R)) \mu(V).
\end{eqnarray*}

{\em The above infinite sum is chosen such that we can easily bound it from above. We start by bounding the factor $(\nu(\Sh(\gamma_i 
x_0,2R)))^{p-\overline{Q}/\overline{Q}}$ as in the discrete case.} Recall that $\Sh(\gamma_i x_0,2R)$ is contained in a ball of radius 
$r(\gamma)$, so by Ahlfors regularity, there is a constant $C_4>0$ such that $\nu(\Sh(\gamma_i x_0,2R)) \leq C_4 
r(\gamma_i)^{\overline{Q}}$. If $\widetilde{d}$ is some visual metric on the boundary, then there are constants $\widetilde{C_5}, 
\widetilde{b}>1$ such that
$\widetilde{r(\gamma)}\leq \widetilde{C_5}\widetilde{b}^{-d_X(x_0,\gamma x_0)}$, where $\widetilde{r(\gamma)}$ is the minimal radius of a $\widetilde{d}$-ball in $\partial X$ that contains $S(\gamma x_0,2R)$.
As $d\in \mathcal{J}_{\mbox{AR}}(\partial G)$, it is H\"older equivalent to a visual metric. So, there are constants $\overline{C_5},r>1$ such that $r(\gamma)\leq \overline{C_5} \widetilde{r(\gamma)}^r$. We then have
\[ r(\gamma) \leq \overline{C_5} \widetilde{r(\gamma)}^r \leq \overline{C_5} \widetilde{C_5}^r (\widetilde{b}^r)^{-d_X(x_0,\gamma x_0)} = C_5 b^{-d_X(x_0,\gamma x_0)}, \]
where $C_5:=\overline{C_5} \widetilde{C_5}^r$ and $b:=\widetilde{b}^r$.
This implies 
\[ \nu(\Sh(\gamma_i x_0,2R))^{(p-\overline{Q})/\overline{Q}} \leq C_4^{(p-\overline{Q})/\overline{Q}} (r(\gamma_i))^{p-\overline{Q}} \leq C_4^{(p-\overline{Q})/\overline{Q}} C_5^{p-\overline{Q}} b^{-d_X(x_0,\gamma x_0)(p-\overline{Q})} .\] 
Writing $C_4:= C_3C_4^{(p-\overline{Q})/\overline{Q}} C_5^{p-\overline{Q}}$, we thus obtain
\[ \|c(k)\|_p^p \leq  D_1+C_4 \mu(V) \sum_{n\in \N} b^{-(n-1)(p-\overline{Q})} \sum_{\gamma_i \in \mathcal{G}\cap \mathcal{A}(n)}  \nu(\Sh(\gamma_i x_0,2R)) .\]

{\em Using the definition of the sequence $\{\gamma_i\}$, we show that the corresponding shadows $\mathcal{S}(\gamma_ix_0,2R)$ are pairwise disjoint when $\gamma_i$ is restricted to any annulus $\mathcal{A}(n)$.
} Let then $\gamma_i\neq \gamma_j$ and assume there is some $\xi \in \mathcal{S}(\gamma_i x_0,2R)\cap \mathcal{S}(\gamma_j x_0,2R)$. Let $c_1,c_2$ be geodesic rays starting in $x_0$ in the class of $\xi$ such that $c_1$ intersects $B(\gamma_i x_0,2R)$ and $c_2$ intersects $B(\gamma_j x_0,2R)$. Let $P_1\in c_1$ be the point closest to $\gamma_i x_0$. It is clear that $d_X(P_1,\gamma_ix_0)\leq 2R$ and $n-1-2R \leq d_X(P_1,x_0) \leq n+2R$ because $\gamma_i\in \mathcal{A}(n)$. Similarly, choosing $P_2\in c_2$ the point closest to $\gamma_j x_0$, we see that $d_X(P_2,\gamma_j x_0)\leq 2R$ and $n-1-2R\leq d_X(P_2,x_0)\leq n+2R$. As $c_1$ and $c_2$ are both geodesic rays in the class of $\xi$ that start in $x_0$, they lie at Hausdorff-distance less than $\delta$ from each other. So, $d_X(P_1,P_2)\leq 4R+1+2\delta$ and so $d_X(\gamma_ix_0,\gamma_jx_0)\leq 8R+1+2\delta$. We thus obtain $d_K(\gamma_i,\gamma_j)\leq A(8R+1+2\delta+B)$ and so $d_V(\gamma_i,\gamma_j)\leq 1$, which leads to a contradiction as by construction $d_V(\gamma_i,\gamma_j)\geq 2$ for all $\gamma_i,\gamma_i \in \mathcal{G}$. We can thus bound $\sum_{\gamma \in \mathcal{G}\cap \mathcal{A}(n)}  \nu(\Sh(\gamma_i x_0,2R))\leq \nu(\partial G)$ and obtain
\[ \|c(k)\|_p^p \leq  D_1+C_4 \mu(V) \nu(\partial G) \sum_{n\in \N} b^{-(n-1)(p-\overline{Q})}.\]
Hence, $c(g)\in L_p(G,\mu)$ for all $g\in G$ and $p>\overline{Q}$.

The proof that $c$ is proper can then be copied word for word from the proof of Lemma \ref{lm:properness}. 
\end{proof}
Using that the Ahflors regular conformal dimension of the boundary of $Sp(n,1)$ and $F_4^{-20}$ are known to be $4n+2$ and $22$ respectively \cite{Pansu}, we obtain the following corollary.
\begin{corollary}
The group $G=F_4^{-20}$ for $p>22$ and the groups $G=Sp(n,1)$ for $p>4n+2$ admit proper affine isometric actions on a finite $l_p$-direct sum of copies of $L_p(G)$. Moreover, the linear part of the action is the direct sum of the natural actions by translation on the copies of $L_p(G)$ and the corresponding $1$-cocycle has compression at least $1/p$. 
\end{corollary}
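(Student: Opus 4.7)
The plan is to apply Theorem \ref{thm:conformal} directly to $G = Sp(n,1)$ and $G = F_4^{-20}$; these corollaries are not new theorems but immediate specializations, so the proposal is essentially a verification that the hypotheses of Theorem \ref{thm:conformal} are satisfied and a citation of Pansu's computation of the conformal dimension of the relevant boundaries.

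First I would check that both $Sp(n,1)$ and $F_4^{-20}$ fit the framework. Each is a connected rank-one simple Lie group of non-compact type, hence acts properly, cocompactly and isometrically on the associated rank-one symmetric space (quaternionic hyperbolic $n$-space $\mathbb{H}^n_{\mathbb{H}}$ and the Cayley hyperbolic plane $\mathbb{H}^2_{\mathbb{O}}$ respectively). These symmetric spaces are CAT($-1$), in particular proper, geodesic and Gromov hyperbolic, so $Sp(n,1)$ and $F_4^{-20}$ are locally compact hyperbolic in the sense of the paper. Both have property $(T)$, hence are non-amenable, and their boundaries $S^{4n-1}$ and $S^{15}$ are uncountable, so they are non-elementary.

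Next I would invoke Pansu's theorem \cite{Pansu}: the Ahlfors regular conformal dimension of the boundary of $Sp(n,1)$, equipped with a visual metric (the Carnot--Carathéodory metric on the quaternionic Heisenberg group, modulo an inversion), equals $Q = 4n+2$, and the Ahlfors regular conformal dimension of the boundary of $F_4^{-20}$ equals $Q = 22$. These are the exact quantities denoted $Q$ in Theorem \ref{thm:conformal}.

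With these two observations in place, the conclusion is immediate: for $p > 4n+2$ in the $Sp(n,1)$ case and $p > 22$ in the $F_4^{-20}$ case, Theorem \ref{thm:conformal} produces a metrically proper affine isometric action on a finite $\ell_p$-direct sum of copies of $L_p(G)$, with linear part the direct sum of the natural right-translation actions and with a $1$-cocycle whose compression function is $\succeq t^{1/p}$, so in particular the compression is at least $1/p$. There is no obstacle; the only point requiring care is keeping track that the $Q$ produced by Pansu refers to the same conformal gauge on $\partial G$ as the one used in Theorem \ref{thm:conformal}, but since any two visual metrics are quasi-symmetric and the Ahlfors regular conformal dimension is an invariant of the quasi-symmetry gauge, this identification is automatic.
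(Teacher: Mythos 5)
Your proposal is correct and follows exactly the paper's route: the paper derives this corollary by applying Theorem \ref{thm:conformal} together with Pansu's computation that the Ahlfors regular conformal dimensions of $\partial Sp(n,1)$ and $\partial F_4^{-20}$ are $4n+2$ and $22$ respectively. Your additional verification of the hypotheses (hyperbolicity via the rank-one symmetric space, non-amenability via property $(T)$, non-elementarity) is left implicit in the paper but is entirely consistent with it.
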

\section{Compression of locally compact property $(A)$ groups}
The following definition is due to U. Haagerup and A. Przybyszewska \cite{Haagerup}.

\begin{defn}
 Let $G$ be a topological group. A \plig metric $d$ on $G$ is a metric on $G$, which is proper, left invariant and generates the topology.
\end{defn}

\begin{defn}
 Let $(X,d_X)$ and $(Y,d_Y)$ be metric spaces.
 \begin{itemize}
  \item A map $f \colon X \to Y$ is called \emph{uniformly expansive} if
  \[
   \forall R > 0 \, \exists S > 0 \quad \mbox{such that} \quad d_X(x,x')\leq R \Rightarrow d_Y(f(x),f(x')) \leq S.
  \]
  \item A map $f \colon X \to Y$ is called \emph{metrically proper} if
  \[
   \forall B \subset Y \quad \mbox{$B$ is bounded} \Rightarrow f^{-1}(B) \subset X \mbox{ is bounded.}
  \]
  \item A map $f \colon X \to Y$ is \emph{coarse} if it is metrically proper and uniformly expansive.
  \item Two coarse maps $h_{0}, h_{1} \colon X \to Y$ are \emph{coarsely equivalent} when
  \[
   \exists C > 0 \, \forall x \in X \quad d_Y(h_0(x), h_1(x)) < C.
  \]
  We denote the coarse equivalence relation by $\sim_{c}$.
  \item The spaces $X$ and $Y$ are \emph{coarsely equivalent} if there exist coarse maps $f \colon X \to Y$ and $g \colon Y \to X$ such that
  \[
   f \circ g \sim_c \mathrm{Id}_Y \quad \mathrm{and} \quad g \circ f \sim_c \mathrm{Id}_X
  \]
 \end{itemize}
\end{defn}

\begin{theorem}\cite[Theorem 4.5. and Theorem 2.8.]{Haagerup}
 Every locally compact, second countable group $G$ has a \plig metric. Moreover any two \plig metrics $l_1,l_2$ are coarsely equivalent for $f:(G,l_1)\rightarrow (G,l_2)$ and $g:(G,l_2)\rightarrow (G,l_1)$ in the above definition as the identity map.
\end{theorem}

\begin{defn}
 Let $(G,d)$ be a locally compact second countable group with a \plig metic, and let $\mu$ denote the Haar measure on $G$. Then we say that \emph{the $d$-balls have exponentially controlled growth} if there exists constants $\alpha, \beta >0$ such that
 \[
  \mu(B_d(e,n)) \leq \beta e^{\alpha n}
 \]
for all $n \in \N$.
\label{def:expcongro}
\end{defn}

\begin{theorem}\cite[Theorem 5.3.]{Haagerup}
 Every locally compact second countable group $G$ has a \plig metric on $G$, for which the $d$-balls have exponentially controlled growth.
\end{theorem}
Whenever we talk about metric notions, such as the compression or the coarse embeddability of a locally compact, second countable group $G$, then we always refer to $G$ being equipped with such a \plig metric whose $d$-balls have exponentially controlled growth. In this context, we can always assume that coarse embeddings are continuous: 
\begin{prop} \cite[Proposition 3.3.]{DL1}
 Let $G$ be a locally compact, second countable group. The following are 
equivalence:
 \begin{itemize}
  \item $G$ admits a coarse embedding into a Hilbert space;
  \item $G$ admits a Borel coarse embedding into a Hilbert space;
  \item $G$ admits a continuous coarse embedding into a Hilbert space.
 \end{itemize}
\end{prop}

Moreover, as far as compression is concerned, we can always restrict our attention to continuous coarse embeddings:
\begin{lemma}
 Let $G$ be a locally compact, second countable group and $d$ a \plig  metric. Let $f \colon G \to \Hs$ be a large scale Lipschitz map. Then there exists a continuous large-scale Lipschitz map $\widehat f$ such that
 \[
 R(f) = R(\widehat f).
 \]
\end{lemma}

\begin{proof}
 In the proof of \cite[Proposition 3.3.]{DL1}, the authors construct a continuous function $\widehat f \colon G \to \Hs$ such that there exists $R > 0$ with
 \[
  \norm{f(x) - \widehat f(x)}_{\Hs} \leq R
 \]
 for all $x \in G$. Hence $\widehat f$ is large-scale Lipschitz and has the same compression as $f$.
\end{proof}

We are interested in the connections between compression and property $(A)$.
\begin{defn}[Property A]
 Let $G$ be a locally compact second countable group. Then $G$ has \emph{property A} if for any compact subset $1\in K \subset G$ and $\delta > 0$ there exists a compact subset $L \subset G$ and a (continuous) positive type kernel $k \colon G \times G \to \C$ such that $\mathrm{supp}(k) \subset \mathrm{Tube}(L)$ and
 \[
  \sup |k(s,t) - 1| < \delta,
 \]
 for every $s,t\in G$ with $s^{-1}t\in K$.
\end{defn}
We wrote the word ``continuous'' in brackets as the definition is equivalent to when we omit continuity.
%
%
%

 %

Let $G$ be a locally compact, second countable group. Given a measurable complex-valued kernel $k \colon G \times G \to \C$, define an operator $\mathrm{Op}(k) \colon L^2(G) \to L^2(G)$ by convolution
\[
 \mathrm{Op}(k) \xi(x) = \int_G k(x,y) \xi(y) \, d\mu(y)
\]

\begin{prop}
 Under either of the following conditions, $\mathrm{Op}(k)$ is a bounded operator.
 \begin{itemize}
  \item[(i)] $k$ is bounded and has compact width.
  \item[(i)] Let $k$ be non-negative real-valued with the property that there exists $C > 0$ such that
  \begin{align*}
   \int_G k(s,t) \, d\mu(s) & \leq C, \quad \mbox{for all $t \in G$}\\
   \int_G k(s,t) \, d\mu(t) & \leq C, \quad \mbox{for all $s \in G$}.
  \end{align*}
Then $\mathrm{Op}(k)$ is bounded and $\norm{\mathrm{Op}(k)} \leq C$.
 \end{itemize}
\end{prop}

\begin{proof}
 We shall only prove (i) as (ii) is a special case of the Schur Test. We aim to show that there exists a constant $M>0$ such that for each $f \in L^2(G):$ 
 \[
  \norm{\mathrm{Op}(k)f} \leq M \norm{f}_{L^2(G)}.
 \]
Take a compact subset $L\subset G$ such that $\supp{k} \subset \mathrm{Tube}(L)$. Further take $K > 0$ such that $k(x,y) \leq K$ for all $x,y \in G$. Noting that $k(x,y) \neq 0$ implies $y \in xL$, we can check that:
 \begin{align*}
  \norm{\mathrm{Op}(k)f}^2 & = \int_G \left( \int_G k(x,y) f(y) \, d\mu(y) \right)^2 d \mu(x)\\
  & = \int_G \left( \int_{xL} k(x,y) f(y) \, d\mu(y) \right)^2 d \mu(x) \\
  & \leq \int_G \left( \int_{xL} k(x,y)^2 \, d\mu(y) \int_{xL} f(y)^2 \, d\mu(y) \right) d \mu(x)\\
  & \leq \mu(L) K^2 \int_G \int_{xL} f(y)^2 \, d\mu(y) d \mu(x)\\
  & = \mu(L) K^2 \int_L \int_G f(xy)^2 \, d \mu(x) d \mu(y) \\
  & \leq \mu(L) K^2 \int_L \norm{f}^2 \, d \mu(y) \\
  & = \mu(L)^2 K^2 \norm{f}^2 \qedhere
 \end{align*}
\end{proof}

We are now in the position to state the main result of this section.
\begin{theorem}
 Let $G$ be a locally compact second countable group and let $d$ be a \plig metric on $G$ with exponentially controlled growth of balls. If $\alpha(G)>1/2$ then $G$ has property A.  \label{theorem:PropertyA}
\end{theorem}

\begin{proof}
 For any sufficiently small $\varepsilon>0$, there exists a continuous large-scale Lipschitz function $f \colon G \to \Hs$ such that
 \[
  d(x,y)^{\frac{1+\varepsilon}{2}} \leq \norm{f(x) - f(y)}_{\Hs} , 
 \]
 when $d(x,y)$ is sufficiently large. For $\kappa \geq 1$, define a positive definite function $u_{\kappa} \colon G \times G \to \R$ by
 \[
  u_{\kappa}(s,t) = e^{-\norm{f(x) - f(y)}^2\kappa^{-1}}
 \]
 Let $\mathcal{A}$ be the $C^*$-algebra of bounded operators on $L^2(G)$ generated by the operators $\mathrm{Op}(k)$, where $k$ runs over all bounded compact width kernels.

 \begin{lemma}
  The operators $\mathrm{Op}(u_{\kappa})$ are in $\mathcal{A}$ for all $\kappa \geq 1$.
 \end{lemma}
 \begin{proof}
 We show that for every $\kappa \geq 1$ the kernel $u \colon G \times G \to \C$ defined by
 \[
  u(s,t) = e^{-\norm{f(s) - f(t)}^2 \kappa^{-1}}
 \]
defines an element in $\mathcal{A}$. Define for $n \in \N$,
\[
k_n(s,t) = \begin{cases} u(s,t), & \quad \mbox{if $d(s,t) > n$}\\
 0 & \quad \mbox{otherwise}
\end{cases}
\]
It follows that $u - k_n$ is a bounded compact width kernel so $\mathrm{Op}(u-k_n) \in \mathcal{A}$. Since $\mathrm{Op}(u) - \mathrm{Op}(u-k_n) = \mathrm{Op}(k_n)$ on compact supported elements of $L^2(G)$, it suffices to show that $\norm{\mathrm{Op}(k_n)} \to 0$ as $n \to \infty$. Using the Schur test, it suffices to show that
there exists $C > 0$ such that $\int_G u(s,t) \, d\mu(t) \leq C$ for all $s \in G$. 
As $d$ is a \plig metric, there exists $\alpha, \beta > 0$ such that $\mu(B(1,n)) \leq \beta e^{\alpha n}$ for all $n > 0$. Hence $\mu(B(1,n+1) \setminus B(1,n)) \leq \beta e^{\alpha (n+1)}$. Choose an $m \in \N$ large enough so that $e^{\alpha} < e^{{\kappa^{-1} m}^{\varepsilon}}$ and if $d(s,t) > m$ then
\[
 d(s,t)^{(1+\varepsilon)/2} \leq \norm{f(s) - f(t)}.
\]
Then, for some constant $C'$, we have
\begin{align*}
 \int_G u(s,t) \, d\mu(t) & = \int_{B(s,m)} u(s,t) \, d\mu(t) + \int_{B(s,m)^c} u(s,t) \, d\mu(t) \\
 & \leq C' \mu(B(s,m)) + \sum_{n\geq m} \int_{B(s,n+1) \setminus B(s,n)} u(s,t) \, d\mu(t) \\
 & \leq C' \mu(B(1,m)) + \sum_{n\geq m} \int_{B(s,n+1) \setminus B(s,n)} e^{-\kappa^{-1} n^{1 + \varepsilon} } \, d \mu(t)\\
 & \leq C' \mu(B(1,m)) + \sum_{n\geq m} \mu(B(s,n+1) \setminus B(s,n)) e^{-\kappa^{-1} n^{1 + \varepsilon} } \\
 & \leq C' \mu(B(1,m)) + \beta e^{\alpha} \sum_{n\geq m} \left( \frac{e ^{\alpha}}{e^{\kappa^{-1} n^{\varepsilon}}} \right)^n \\
 & \leq C' \mu(B(1,m)) + \beta e^{\alpha} \sum_{n\geq m} \left( \frac{e ^{\alpha}}{e^{\kappa^{-1} m^{\varepsilon}}} \right)^n :=C <\infty.
\end{align*}
\end{proof}
 
 \begin{lemma}
  The operators $\mathrm{Op}(u_k) \colon L^2(G) \to L^2(G)$ are positive for all $k \geq 1$
 \end{lemma}
 
 \begin{proof}
 We show that for every $\kappa > 0$ the kernel $u \colon G \times G \to \C$ defined by
 \[
  u(s,t) = e^{-\norm{f(s) - f(t)}^2 \kappa^{-1}}
 \]
 is a positive element in $\mathcal{A}$. By \cite[Theorem C.1.4.]{BHV08}, as $u$ is positive there exists a Hilbert space and a continuous map $h \colon G \to \Hs$ such that
 \[
  u(x,y) = \inner{h(x)}{h(y)}_{\Hs},
 \]
 for all $x,y \in G$.
 We aim to show that $\inner{\mathrm{Op}(u) \xi}{\xi} \geq 0$ for all $ \xi \in L^2(G)$. It suffices to show that this inequality holds for all compactly supported functions on $G$ as these form a dense subset of $L^2(G)$. Fix $\xi \in L^2(G)$ which is supported on a compact subset $K\subset G$. As $h$ is continuous there exists $M > 0$ such that $\norm{h(x)}_{\Hs} \leq M^{1/2}$ for all $x \in K$. For any $\eta \in \Hs$, define a function $\varphi \colon G \to \C$ by
 \[
  \varphi(y) = \inner{\eta}{h(y) \xi(y)}_{\Hs}.
 \]
 This is a square integrable function. Indeed
 \begin{align*}
  \int_G |\inner{\eta}{h(y) \xi(y)}_{\Hs}|^2 \, d\mu(y) & = \int_K |\inner{\eta}{h(y) \xi(y)}_{\Hs}|^2 \, d\mu(y) \\
	& \leq \int_K \norm{\eta}_{\Hs}^2 \norm{h(y) \xi(y)}_{\Hs}^2 \, d \mu(y) \\
  & \leq \norm{\eta}_{\Hs}^2 \int_K |\xi(y)|^2 \norm{h(y)}_{\Hs}^2 \, d\mu(y)\\
  & \leq \norm{\eta}_{\Hs}^2 \norm{\xi}^{2}_{L^2} M \mu(K)
 \end{align*}
By compactness of $K$, one has $L^2(K)\subset L^1(K)$, so $\varphi \in L^1(K)$.  This implies that $\varphi^{1/2}$ is also square-integrable. We can thus define a functional $\Hs \to \C$ by
 \[
  \eta \mapsto \int_K \inner{\eta}{h(y) \xi(y)}_{\Hs} \, d \mu(y)
 \]
This functional is clearly linear and it is continuous as
\begin{eqnarray*}
 |\int_K \inner{\eta_1-\eta_2}{h(y) \xi(y)}_{\Hs} \, d\mu(y) |^2 & \leq &
\int_K |\inner{\eta_1-\eta_2}{h(y) \xi(y)}_{\Hs}|^2 \, d\mu(y) \\
&  \leq & \norm{\eta_1-\eta_2}_{\Hs}^2 \norm{\xi}^{2}_{L^2} M \mu(K).
\end{eqnarray*}
By the Riesz representation theorem, there exists a unique vector, denoted here by $\int_K h(y) \xi(y) \, d\mu(y) \in \Hs$, such that
\[
 \inner{\eta}{\int_K h(y) \xi(y) \, d\mu(y)} = \int_K \inner{\eta}{h(y) \xi(y)}_{\Hs} \, d\mu(y).
\]
Using this idea twice, we get that 
\begin{align*}
 \inner{\mathrm{Op}(u) \xi}{\xi}_{L^2}  & = \int_K \int_K u(x,y) \xi(y) \overline{\xi(x)} \, d\mu(y) d\mu(x) \\
 & =  \int_K \int_K \inner{h(x) \xi(x)}{h(y) \xi(y)}_{\Hs} \, d \mu(y) d\mu(x) \\
 & = \inner{\int_K h(x) \xi(x) \, d\mu(x)}{\int_K h(x) \xi(x) \, d\mu(x)}_{\Hs} \geq 0
\end{align*}
 \end{proof}
We have that $\mathrm{Op}(u_\kappa)$ are positive operators for all $\kappa \geq 1$. Let $V_\kappa \in \mathcal{A}$ be the positive square root of $\mathrm{Op}(u_\kappa)$ and let $W_\kappa \in \mathcal{A}$ be operators represented by compact width kernels and such that $\norm{V_\kappa - W_\kappa} \norm{V_\kappa} \to 0$. Define kernels $\widehat u_\kappa $ by
\[
 \widehat u_\kappa (s,t) = \frac{1}{\mu(B(e,1/\kappa))^2} \inner{W_\kappa (\chi_{B(s,1/\kappa)})}{W_\kappa (\chi_{B(t,1/\kappa)})}_{L^2(G)}
\]
One verifies easily that this is a positive kernel. Let us show it is of compact width.
\begin{lemma} The positive kernels $\widehat u_\kappa$ have compact width. \end{lemma}
\begin{proof}
To lighten notation, let us fix a $\kappa>0$ and write $W:= W_\kappa=\mathrm{Op}(w)$ where $w$ is a bounded kernel with compact with. Denote $L\subset G$ a compact set such that $\supp{w} \subset \mathrm{Tube}(L)$ and choose an element $a \in L^{-1}$. Then for any $s \in G$
\[
 \sup \set{d(sa,x) : x \in s B(e,1/\kappa) L^{-1}} \leq l(a) + 1/\kappa + M \leq 2M + 1/\kappa
\]
Hence $s B(e,1/\kappa) L^{-1} \subset B(sa, 2M + 1/\kappa)$ for all $s \in G$. Thus if $d(sa, ta) > 4M + 2/\kappa$ for some $t\in G$, then 
\[
s B(e,1/\kappa) L^{-1} \cap  t B(e,1/\kappa) L^{-1} = \emptyset
\]
As $d(s,t) \leq d(s,sa) + d(sa,ta) + d(ta,t)$ it follows that $d(sa,ta) \geq d(s,t) - 2M$, so if $d(s,t) > 6M +2/\kappa$ then  $sB(e,1/\kappa) L^{-1} \cap  tB(e,1/\kappa) L^{-1} = \emptyset$. We have that
\begin{align*}
 \mu(B(e,1/\kappa))^2  \widehat u_\kappa (s,t) & = \inner{W_\kappa \chi_{B(s,1/\kappa)}}{W_\kappa \chi_{B(t,1/\kappa)}}_{L^2(G)}\\
  & = \int_G \mathrm{Op}(w) \chi_{B(s,1/\kappa)}(x) \cdot \overline{\mathrm{Op}(w) \chi_{B(t,1/\kappa)}(x)} \, d\mu(x)\\
  & = \int_G \left[ \int_{B(s,1/\kappa)} w(x,y) \, d\mu(y) \right] \cdot \overline{\left[ \int_{B(t,1/\kappa)} w(x,z) \, d\mu(z) \right]} \, d\mu(x)
\end{align*}
As $\supp{w} \subset \mathrm{Tube}(L)$, we deduce that $\supp{\widehat u_\kappa} \subset \mbox{Tube}(B(1, 6M + 2/\kappa))$.
\end{proof}
To finish the proof, we now prove that one can bound $\lvert u_k(s,t) - \widehat u_k(s,t)\rvert$ uniformly over any Tube$(K)$ with $K$ compact. Note that for any kernel $v_k$, the triangle inequality shows that 
\[ \lvert u_k(s,t) - \widehat u_k(s,t)\rvert \leq \lvert u_k(s,t)-v_k(s,t) \rvert + \lvert v_k(s,t)-\widehat u_k(s,t)  \rvert. \]
Let us define $v_k$ by
\[
 v_k(s,t) = \frac{1}{\mu(B(e,1/k))^2} \inner{\mathrm{Op}(u_k) \chi_{B(s,1/k)}}{ \chi_{B(t,1/k)}}_{L^2(G)}.
\]

Then,
\begin{align*}
 |v_k(s,t) - \widehat u_k(s,t)| & = \frac{1}{\mu(B(e,1/k))^2}|\inner{(\mathrm{Op}(u_k) - W_k^* W_k) \chi_{B(s,1/k)}}{\chi_{B(t,1/k)}}| \\
 & \leq \norm{\mathrm{Op}(u_k) - W_k^* W_k} = \norm{V_k^* V_k - W_k^* W_k}\\
 & \leq \norm{V_k - W_k}(\norm{V_k} + \norm{W_k})\\
 & \leq \norm{V_k - W_k}(1\norm{V_k} + \norm{V_k - W_k}),
\end{align*}
which tends to zero as $k \to \infty$.
It now suffices to show that for any $K \subset G$ compact and $\varepsilon > 0$ there exists a $k_0 \in \N$ such that for all $k \geq k_0$
\[
\sup_{(s,t) \in \mathrm{Tube}(K)} |u_k(s,t) - v_k(s,t)| < \varepsilon. 
\]
As $f$ is a coarse embedding, there exists a constant $C=C_K$ such that for all $(s,t) \in \mathrm{Tube}(K), k\in \R, x \in B(s,1/k)$ and $y \in B(t,1/k)$
\[
 |\norm{f(s) - f(t)}^2 k^{-1} - \norm{f(x) - f(y)}^2 k^{-1}| < C / k
\]
We choose $k$ large enough so that $C/k < \varepsilon$. For any $a,b \geq 0$ we have that $|e^{-a} - e^{-b}| \leq |a - b|$, so $|e^{\norm{f(s) - f(t)}^2 k^{-1}} -  e^{\norm{f(x) - f(y)}^2 k^{-1}}| < \varepsilon$ and so
\begin{align*}
 |u_k(s,t) - v_k(s,t)| & =  |u_k(s,t) - \frac{1}{\mu(B(e,1/k))^2} \int_{B(s,1/k)} \int_{B(t,1/k)} u_k(x,y) \, d\mu(x) d\mu(y) | \\ 
 & \leq  \frac{1}{\mu(B(e,1/k))^2} \int_{B(s,1/k)} \int_{B(t,1/k)}| u_k(s,t) -  u_k(x,y)| \, d\mu(x) d\mu(y) \\
 & = \frac{1}{\mu(B(e,1/k))^2} \cdot \\ 
&  \int_{B(s,1/k)} \int_{B(t,1/k)}| e^{-\norm{f(s) - f(t)}^2k^{-1}} -  e^{-\norm{f(x) - f(y)}^2k^{-1}}| \, d\mu(x) d\mu(y)\\
 & \leq \varepsilon
\end{align*}
 As
\begin{multline*}
\sup_{(s,t) \in \mathrm{Tube}(K)}|u_k(s,t) - \widehat u_k(s,t)|  \leq \sup_{(s,t) \in \mathrm{Tube}(K)} |u_k(s,t) -v_k(s,t)|\\ + \sup_{(s,t) \in \mathrm{Tube}(K)} |v_k(s,t) - \widehat u_k(s,t)|,
\end{multline*}
we conclude that $\widehat u_k$ uniformly converges to 1 on sets of the form Tube$(K)$ with $K$ compact. As the $\widehat u_k$ are all finite width kernels, $G$ has property $(A)$.
\end{proof}

A direct generalization of Theorem $4.1$ in \cite{CTV} shows that the equivariant counterpart of our main result also generalizes to the locally compact setting. This generalization is straightforward so we omit the proof here.
\begin{prop}
 Let $G$ be a locally compact second countable group and let $d$ be a \plig metric with exponentially controlled growth of balls. If $G$ admits a proper affine isometric action on a Hilbert space such that the $1$-cocycle $b$ has compression $R(b,d) > 1/2$, then $G$ is amenable.
\end{prop}

\section*{Acknowledgements}
{We would like to thank Marc Bourdon, Jacek Brodzki, Yves de Cornulier, Steven Deprez, Bogdan Nica, Kang Li and Alain Valette for useful comments and remarks on this manuscript. We also thank Ian Leary and Romain Tessera for an interesting discussion and Pierre-Emmanuel Caprace for introducing the second author to locally compact hyperbolic groups.}

\end{document}